\newcommand{\cU}{\mathcal{U}}
\newcommand{\Br}{\mathfrak{Br}}
\newcommand{\Hom}{\operatorname{Hom}}
\newcommand{\Bc}{\mathfrak{C}}
\newcommand{\As}{A_\circ}
\newcommand{\AHopf}{(A,\Delta,\epsilon,\cdot,1,S)}
\newcommand{\AsHopf}{(A,\Delta,\epsilon,\circ,1,T)}
\newcommand{\Bs}{B_\circ}
\newcommand{\Mp}{\mathfrak{Mp}}
\newcommand{\Abrace}{(A,\cdot,\circ)}
\newcommand{\Bbrace}{(B,\cdot,\circ)}
\newcommand{\N}{\mathbb{N}}
\newcommand{\GL}{\mathrm{GL}}
\newcommand{\id}{\mathrm{id}}
\newcommand{\g}{\mathfrak{g}}
\newcommand{\h}{\mathfrak{h}}
\newcommand{\sld}{\mathfrak{sl}_2}
\newcommand*\colvec[1]{
\global\colveccount#1
\begin{pmatrix}
        \colvecnext
        }
        \def\colvecnext#1{
        #1
        \global\advance\colveccount-1
        \ifnum\colveccount>0
        \\
        \expandafter\colvecnext
        \else
\end{pmatrix}
\fi
}
\numberwithin{equation}{section}
\numberwithin{figure}{section}
\numberwithin{table}{section}
\theoremstyle{plain}
\newtheorem{thm}{Theorem}[section]
\theoremstyle{plain}
\newtheorem{lem}[thm]{Lemma}
\newtheorem{cor}[thm]{Corollary}
\theoremstyle{plain}
\newtheorem{pro}[thm]{Proposition}
\newtheorem*{conjecture*}{Conjecture}
\newtheorem{example}[thm]{Example}
\theoremstyle{remark}
\newtheorem{rem}[thm]{Remark}
\theoremstyle{plain}
\newtheorem{exa}[thm]{Example}
\newtheorem{defn}[thm]{Definition}
\newtheorem{notation}[thm]{Notation}
\newcounter{commentcounter}
\begin{document}

\title{Hopf braces and Yang-Baxter operators}

\begin{abstract}
This paper introduces Hopf braces, a new algebraic structure related to the Yang--Baxter equation which include Rump's braces and their non-commutative generalizations as particular cases. Several results of classical braces are still valid in our context. Furthermore, Hopf braces provide the right setting
for considering left symmetric algebras as Lie-theoretical analogs of braces. 
\end{abstract}

\author{Iv\'an Angiono}
\address{FaMAF-CIEM (CONICET), Universidad Nacional de C\'órdoba, Medina Allende
s/n, Ciudad Universitaria, 5000 C\'órdoba, Argentina}
\email{angiono@famaf.unc.edu.ar}

\author{C\'esar  Galindo}
\address{Departamento de matem\'áticas, Universidad de los Andes, Carrera 1 N. 18A -
10, Bogot\'á, Colombia}
\email{cn.galindo1116@uniandes.edu.co}

\author{Leandro Vendramin}
\address{
Departamento de Matem\'atica -- FCEN,
Universidad de Buenos Aires, Pab. I -- Ciudad Universitaria (1428)
Buenos Aires, Argentina}
\email{lvendramin@dm.uba.ar}

\maketitle


\section*{Introduction}

A \emph{Yang--Baxter operator} on vector space $V$ is an invertible linear endomorphism $c\in\GL(V\otimes V)$ satisfying the \emph{braid  equation} 
\begin{align*}
    (c\otimes\id)(\id\otimes c)(c\otimes\id)=(c\otimes\id)(\id\otimes c)(c\otimes\id).
\end{align*}

Attempts to find solutions of the braid equation turned out to be an important problem that led to the theory of quantum groups. 

Quantum groups 
have remarkable applications in algebra,  low-dimensional topology, differential equations and mathematical physics, see for example~\cite{MR1321145}. These applications are mainly based on the existing connection between quantum groups and Yang--Baxter operators.
Due to the importance of the braid equation equation in mathematics and physics, Drinfeld proposed to study set-theoretical solutions~\cite{MR1183474}. 
A set-theoretical solution of the braid equation is a pair $(X,r)$, 
where $X$ is a set and $r\colon X\times X\to X\times X$ is a bijective
map such that 
\[
(r\times\id)(\id\times r)(r\times\id)=(r\times\id)(\id\times r)(r\times\id).
\]
The first works on set-theoretical solutions are those of Etingof, Schedler and Soloviev~\cite{MR1722951} 
and Gateva-Ivanova and Van den Bergh~\cite{MR1637256}; 
these papers are devoted to involutive solutions. 

There are several
connections between involutive solutions and other branches 
of mathematics. However, 
the structure of set-theoretical solutions is far from being understood. 
To understand the structure behind non-degenerate involutive set-theoretical solutions, in~\cite{MR2278047} Rump introduced \emph{braces}. A (left) brace is an abelian group $(A,+)$ with another group structure, defined via $(a,b)\mapsto ab$, such that the compatibility condition 
\[
a(b + c) + a = ab + ac
\]
holds for all $a, b, c\in A$. The theory of braces is being developed quite intensively, see for example~\cite{BachillerP3,MR3465351,MR3177933,GI15,RumpClassical,2015arXiv150900420S}. One advantage of the language of braces is that one can imitate ring theory to discuss braided groups and sets. Just as in ring theory, one can define right and two-sided ideals of braces, and study their properties. Thus it is a common belief that braces provide the right setting for studying involutive set-theoretical solutions of the Yang--Baxter equation. Applications to ring theory and group theory are also expected. 


The purpose of the this work is to introduce Hopf braces, a new algebraic structure related to the Yang--Baxter equation, which include Rump's braces and their non-commutative generalizations~\cite{GV} as particular cases. 
Our generalization is based on Hopf algebras. Remarkably, 
several results of classical braces are still valid in our context; for example every Hopf brace $H$ produce 
a Yang--Baxter operator on $H$. Furthermore, since Hopf algebras
generalize simultaneously groups and Lie algebras, 
our structure provides the right setting
for considering Lie-theoretical analogs of braces. In this context, left symmetric algebras naturally appear. 

\medskip
The paper is organized as follows. In Section~\ref{braces} we 
define Hopf braces and prove their main properties. In Theorem~\ref{thm:main}
we prove that Hopf braces are equivalent to bijective $1$-cocycles. 
Section~\ref{cocommutative} is devoted to study Hopf braces over
cocommutative Hopf algebras. In Corollary~\ref{cor:YB} 
we prove that Hopf braces over cocommutative Hopf algebras
naturally produce solutions of the Yang--Baxter equation. In Section~\ref{matched} the connection between Hopf braces 
and matched pairs of commutative Hopf algebras is explored, see Theorem~\ref{thm:matched}. 
Section~\ref{LSA} explore the connection between Hopf braces and left symmetric algebras. 
In Section~\ref{schemes} we show that our constructions allow us to consider solutions in the category of affine schemes. 

\section{Hopf braces}
\label{braces}

Our Hopf-theoretical generalization of the concept of a brace is based on the definition given by Ced\'o, Jespers and Okni{\'n}ski, see~\cite[Definition 1]{MR3177933}.

\begin{defn}
	\label{def:brace}
	Let $(A,\Delta,\epsilon)$ be a coalgebra. A \emph{Hopf brace structure} over $A$ consist of the following data:
	\begin{enumerate}
	    \item a Hopf algebra structure $(A,\cdot,1,\Delta,\epsilon,S)$ and 
	    \item a Hopf algebra structure $(A,\circ,1_\circ,\Delta,\epsilon,T)$ 
	\end{enumerate}
	satisfying the following compatibility:
	\begin{align}
		\label{eq:brace}
		&a\circ(bc)=(a_1\circ b)S(a_2)(a_3\circ c),&& a,b,c\in A.
	\end{align}
\end{defn}

\begin{notation}
	Given a Hopf brace as in the Definition \ref{def:brace}, we write $A$ for
	the Hopf algebra structure $(A,\cdot,1,\Delta,\epsilon,S)$ and $\As$ for
	the other one. The Hopf brace is denoted by $\Abrace$. 
\end{notation}

\begin{rem}	
	\label{rem:1=1*}
	In any Hopf brace, $1_\circ=1$. Indeed, 
	setting $a=b=1_\circ$ in \eqref{eq:brace} one obtains $1_\circ c=c$ for all $c$. Similarly, $a=c=1_\circ$ yields $b1_\circ=b$ for all $b$.
\end{rem}

\begin{example}
Recall from~\cite{GV} that a \emph{skew left brace} is a group $A$ with an additional group structure given by $(a,b)\mapsto a\circ b$ such that  $a\circ(bc)=(a\circ b)a^{-1}(a\circ c)$ holds for all $a,b,c\in A$, where $a^{-1}$ denotes the inverse
of $a$ with respect to the group structure given by $(a,b)\mapsto ab$. 

The group algebra and its dual are classical examples of commutative Hopf algebras~\cite[Chapter 2]{MR594432}. Given a brace $A$ and a field $k$, the group algebra $kA$ of $A$ (respectively, $k^A$ the algebra of functions over $A$) yields a cocommutative Hopf brace (respectively, a commutative Hopf co-brace). Thus as a basic example of a Hopf brace, we may take the group algebra of a (classical or skew) brace. 
\end{example}

The following two examples are
based on the semidirect product of cocommutative Hopf algebras, see~\cite[\S2.4]{MR594432}. 

\begin{example}
Let $H$ and $K$ be cocommutative Hopf algebras. Assume that $H$ is a left $K$-module bialgebra. Then $H\#K$ with
\begin{align*}
    &(h\#k)(h'\#k')=hh'\#kk',&&
    (h\#k)\circ (h'\#k')=h(k_1\rightharpoonup h')\# k_2k',
\end{align*}
where $h,h'\in H$ and $k,k'\in K$, is a Hopf brace. 
\end{example}

\begin{example}
Let $H$ and $K$ be cocommutative Hopf algebras. Assume that $K$ is commutative, $H$ is a left $K$-module bialgebra via $\rightharpoonup$ and $k\rightharpoonup (k'\cdot h)=
k'\cdot (k\rightharpoonup h)$ for all $k,k'\in K$ and $h\in $. Then 
\begin{align*}
    &(h\#k)(h'\#k')=h(k_1\cdot h')\# k_2k',\\
    &(h\#k)\circ (h'\#k')=h(k_1\rightharpoonup h')\# k_2k',
\end{align*}
where $h,h'\in H$ and $k,k'\in K$, is a Hopf brace. 
\end{example}

Let $\Abrace$ and $\Bbrace$ be Hopf braces. A \emph{homomorphism} of Hopf
braces $f\colon \Abrace\to\Bbrace$ is a linear map $f$ between (the vector
spaces) $A$ and $B$ such that $f\colon A\to B$  and $f\colon \As\to\Bs$ are
Hopf algebra homomorphisms. Hopf braces form a category.

Fix a Hopf algebra $A=(A,\cdot,1,\Delta,\epsilon,S)$.  Let $\Br(A)$ be the full
subcategory of the category of Hopf braces with objects $\Abrace$. This means that
the objects of $\Br(A)$ are the Hopf braces such that the first Hopf algebra
structure is that of $A$. 

\begin{lem}
	\label{lem:truco}
	Let $\Abrace$ be a Hopf brace. Then
	\[
		S(a_1\circ b)a_2=S(a_1)(a_2\circ S(b))
	\]
	for all $a,b\in A$. 

	\begin{proof}
		Equation~\eqref{eq:brace} implies that
		\begin{equation}
			\label{eq:truco}
			\epsilon(b)a=a\circ (b_1S(b_2))=(a_1\circ b_1)S(a_2)(a_3\circ S(b_2))
		\end{equation}
		holds for all $a,b\in A$.

		Now let $a,b\in A$. Using~\eqref{eq:truco}, 
		\begin{align*}
		   	S(a_1\circ b)a_2 &= S(a_1\circ b_1\epsilon(b_2))a_2 = S(a_1\circ b_1)\epsilon(b_2)a_2\\
			&=S(a_1\circ b_1)(a_2\circ b_2)S(a_3)(a_4\circ S(b_3))\\
			&=\epsilon(a_1\circ b_1)S(a_2)(a_3\circ S(b_2))
			=S(a_1)(a_2\circ S(b)).
		\end{align*}
		This completes the proof.
	\end{proof}
\end{lem}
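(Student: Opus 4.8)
The plan is to prove the identity $S(a_1\circ b)a_2 = S(a_1)(a_2\circ S(b))$ by exploiting the Hopf-algebraic consequences of the brace compatibility \eqref{eq:brace}, combined with the defining properties of the antipode $S$. The central preliminary observation is that applying $m(S\otimes\id)\Delta = \epsilon(\cdot)1$ inside \eqref{eq:brace} gives \eqref{eq:truco}, namely $\epsilon(b)a = (a_1\circ b_1)S(a_2)(a_3\circ S(b_2))$; this is really the ``antipode-on-the-second-slot'' version of the compatibility and it is the workhorse of the whole argument.

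First I would record \eqref{eq:truco}. Then, starting from the left-hand side $S(a_1\circ b)a_2$, I would insert $\epsilon(b_2) = \epsilon(b_2)$ via $b = b_1\epsilon(b_2)$ to get $S(a_1\circ b_1)\epsilon(b_2)a_2$, and replace $\epsilon(b_2)a_2$ — or rather a shifted copy of it — using \eqref{eq:truco}: after relabelling the coproduct legs this turns the expression into $S(a_1\circ b_1)(a_2\circ b_2)S(a_3)(a_4\circ S(b_3))$. Now the key cancellation: the first two factors $S(a_1\circ b_1)(a_2\circ b_2)$ form $m(S\otimes\id)\Delta$ applied to $a\circ b$ — using that $\Delta$ is an algebra map for $\circ$ and that $S$ here is the antipode $T$... wait, one must be careful: in fact $S(a_1\circ b_1)(a_2\circ b_2)$ is \emph{not} immediately $\epsilon(a\circ b)1$ because $S$ is the antipode for the $\cdot$-structure, not for $\circ$. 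The correct reading is that $\Delta(a\circ b) = (a_1\circ b_1)\otimes(a_2\circ b_2)$ by cocommutativity-free multiplicativity of $\Delta$ on the $\circ$-Hopf algebra, so $S(a_1\circ b_1)(a_2\circ b_2) = (\epsilon\otimes\id)\bigl(m(S\otimes\id)\Delta\bigr)(a\circ b) = \epsilon(a\circ b)1 = \epsilon(a)\epsilon(b)1$, using $\epsilon(a\circ b)=\epsilon(a)\epsilon(b)$ since $\epsilon$ is an algebra map for $\circ$. This collapses the expression to $\epsilon(a_1)\epsilon(b_1)\,S(a_2)(a_3\circ S(b_2))$, and absorbing the counits via $\epsilon(a_1)a_2 \leadsto a_1$, $\epsilon(b_1)b_2\leadsto b$ yields exactly $S(a_1)(a_2\circ S(b))$.

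The step I expect to be the main obstacle is bookkeeping the Sweedler indices correctly when substituting \eqref{eq:truco} into the middle of the expression: one is applying the identity not to $a$ itself but to a ``slice'' $a_2$ of an already-split element, so the number of coproduct legs jumps and the relabelling must be done with care so that the $S(a_i)$ and $(a_j\circ S(b_k))$ end up adjacent in the right order for the cancellation. There is no deep difficulty — everything is forced once \eqref{eq:truco} is in hand — but it is the kind of manipulation where an off-by-one in the coproduct legs silently breaks the proof, so I would write the chain of equalities out in full Sweedler notation exactly as in the displayed computation, checking at each step which comultiplication (the shared $\Delta$) and which antipode ($S$, never $T$) is being invoked.
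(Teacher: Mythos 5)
Your proposal is correct and follows essentially the same route as the paper's proof: first derive \eqref{eq:truco} by substituting $b_1$ and $S(b_2)$ into the compatibility \eqref{eq:brace}, then insert it into $S(a_1\circ b)a_2$ and collapse $S(a_1\circ b_1)(a_2\circ b_2)=\epsilon(a_1\circ b_1)1$ using that $S$ is the antipode for $\cdot$ while $\Delta$ is multiplicative for $\circ$. The worry you flag about $S$ versus $T$ is resolved exactly as you resolve it, and the Sweedler bookkeeping matches the paper's displayed computation.
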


\begin{lem}
	\label{lem:lambda}
	Let $\Abrace$ be a Hopf brace. Then $A$ is a left $\As$-module-algebra with
	\[
		a\rightharpoonup b=S(a_1)(a_2\circ b),\quad a,b\in A.
	\]
	\begin{proof}
		By Remark~\ref{rem:1=1*}, 
		\begin{align*}
		   a\rightharpoonup 1&=S(a_1)(a_2\circ 1)=S(a_1)a_2=\epsilon(a)1 &\text{for all }&a\in A.
		\end{align*}
		Now Equation~\eqref{eq:brace} implies that 
		\begin{align*}
		a\rightharpoonup (bc)&=S(a_1)(a_2\circ (bc))=S(a_1)(a_2\circ b)S(a_3)(a_4\circ c)
		=(a_1\rightharpoonup b)(a_2\rightharpoonup c)
		\end{align*}
		for all $a,b,c\in A$. 
		Clearly $1\rightharpoonup a=S(1)(1\circ a)=a$
		holds for all $a\in A$. 
		Now using Lemma~\ref{lem:truco} and the Hopf brace structure, one proves that $(a\circ b)\rightharpoonup c=a\rightharpoonup(b\rightharpoonup c)$ for all $a,b,c\in A$.
	\end{proof}
\end{lem}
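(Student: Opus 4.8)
The plan is to verify the three defining properties of a module-algebra in turn: that $\rightharpoonup$ is a unital action of the Hopf algebra $\As$ on $A$, that $A$ acts on itself as an algebra (the Leibniz-type rule for products and the action on the unit), and that the action is associative, i.e. $(a\circ b)\rightharpoonup c = a\rightharpoonup(b\rightharpoonup c)$. Two of these come out by short manipulations that are already sketched in the excerpt: the identity $a\rightharpoonup 1 = \epsilon(a)1$ follows from $a_2\circ 1 = a_2$ (Remark~\ref{rem:1=1*}) and the antipode axiom $S(a_1)a_2 = \epsilon(a)1$; the compatibility $a\rightharpoonup(bc) = (a_1\rightharpoonup b)(a_2\rightharpoonup c)$ is a direct consequence of the brace axiom~\eqref{eq:brace} together with coassociativity; and $1\rightharpoonup a = a$ is immediate from $1_\circ = 1$ and $S(1)=1$. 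So the real content is the associativity of the action, and that is where I expect the work to be.

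For the associativity step, I would start from the left-hand side, $(a\circ b)\rightharpoonup c = S\bigl((a\circ b)_1\bigr)\bigl((a\circ b)_2\circ c\bigr)$, and use that $\Delta$ is an algebra map for the $\circ$-product so that $(a\circ b)_1\otimes(a\circ b)_2 = (a_1\circ b_1)\otimes(a_2\circ b_2)$; thus the expression becomes $S(a_1\circ b_1)\,\bigl((a_2\circ b_2)\circ c\bigr)$. Now apply associativity of $\circ$ to rewrite $(a_2\circ b_2)\circ c = a_2\circ(b_2\circ c)$, giving $S(a_1\circ b_1)\,\bigl(a_2\circ(b_2\circ c)\bigr)$. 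The key move is then to invoke Lemma~\ref{lem:truco} in the form $S(a_1\circ b_1)a_2 = S(a_1)(a_2\circ S(b_1))$ — but since the ``$a_2$'' here is instead $a_2\circ(b_2\circ c)$, I should first expand $b_2\circ c$ using~\eqref{eq:brace} or, more cleanly, reorganize so that Lemma~\ref{lem:truco} can be applied to the pair $(a, b)$ with the remaining factor carried along. Concretely, the plan is: insert $b_2 S(b_3)b_4 = b_2\,\epsilon(b_3)$ (or rather use the antipode to split $b$ appropriately), apply~\eqref{eq:brace} to $a_2\circ(b_2 \cdot (b_3\circ c))$-type expressions, and collapse using $S(a_1\circ b_1)(a_2\circ b_2) = \epsilon(a_1)\epsilon(b_1)\cdots$ as in the proof of Lemma~\ref{lem:truco}. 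After the dust settles one should be left with $S(a_1)\bigl(a_2\circ\bigl(S(b_1)(b_2\circ c)\bigr)\bigr) = S(a_1)\bigl(a_2\circ (b\rightharpoonup c)\bigr) = a\rightharpoonup(b\rightharpoonup c)$, using once more that $\circ$ distributes over $\cdot$ via~\eqref{eq:brace} to pull the $\circ$ inside, i.e. $a_2\circ\bigl(S(b_1)(b_2\circ c)\bigr) = (a_2\circ S(b_1))S(a_3)(a_4\circ(b_2\circ c))$, and matching this against the intermediate expression.

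The main obstacle will be bookkeeping the Sweedler indices: both $a$ and $b$ get split several times (the brace axiom~\eqref{eq:brace} triples the comultiplication on the left argument, and Lemma~\ref{lem:truco} itself is an identity involving four legs of $a$), so the computation threads a half-dozen tensor factors at once and one must apply~\eqref{eq:brace}, coassociativity, and the antipode axioms in exactly the right order to get everything to telescope. There is no conceptual difficulty — every identity needed is already available — but it is the kind of Hopf-algebraic calculation where a misplaced antipode or a dropped $\epsilon$ derails the whole thing, so I would carry it out carefully in Sweedler notation, perhaps first sanity-checking on the group-algebra example where~\eqref{eq:brace} reduces to the skew-brace identity $a\circ(bc) = (a\circ b)a^{-1}(a\circ c)$ and the claimed action is the familiar $a\rightharpoonup b = a^{-1}(a\circ b)$.
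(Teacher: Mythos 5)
Your proposal is correct and follows essentially the same route as the paper: the unit, product and unital-action identities are verified exactly as you describe, and the key associativity $(a\circ b)\rightharpoonup c=a\rightharpoonup(b\rightharpoonup c)$ is derived, as the paper indicates, from Lemma~\ref{lem:truco} together with~\eqref{eq:brace}. Indeed your sketch closes up cleanly: $a\rightharpoonup(b\rightharpoonup c)=S(a_1)(a_2\circ S(b_1))S(a_3)(a_4\circ b_2\circ c)=S(a_1\circ b_1)a_2S(a_3)(a_4\circ b_2\circ c)=S(a_1\circ b_1)(a_2\circ b_2\circ c)=(a\circ b)\rightharpoonup c$.
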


\begin{rem}
\label{rem:a*b,ab}
It follows from the definition that
\begin{align}
    \label{eq:a*b}&a\circ b=a_1(a_2\rightharpoonup b),\\
    \label{eq:ab}&ab=a_1\circ (T(a_2)\rightharpoonup b)
\end{align}
for all $a,b\in A$.
\end{rem}

\begin{defn}
	Let $H$ and $A$ be Hopf algebras. Assume that 
	$A$ be a $H$-module-algebra. A \emph{bijective
	$1$-cocycle} is a coalgebra isomorphism $\pi\colon H\to A$ such that 
	\begin{align}\label{eq:cocycle-condition}
		&\pi(hk)=\pi(h_1)(h_2\rightharpoonup \pi(k)) & \text{for all }&h,k\in H.
	\end{align}
\end{defn}


\begin{rem}
Any bijective $1$-cocycle $\pi$ satisfies $\pi(1)=1$. Indeed,  
setting $h=k=1$ it follows that $\pi(1)=\pi(1)\pi(1)$. Hence $\pi(1)=1$ since $\pi(1)$ is a group-like element. 
\end{rem}

Let $\pi\colon H\to A$ and $\eta\colon K\to B$ be bijective $1$-cocycles. 
A \emph{homorphism} between these bijective $1$-cocycles is a pair $(f,g)$ of Hopf algebra maps $f\colon H\to K$, $g\colon A\to B$ such that 
\begin{align*}
&\eta f=g \pi,\\
&g(h\rightharpoonup a)=f(h)\rightharpoonup g(a),&&a\in A,\;h\in H.
\end{align*}
Bijective $1$-cocycles form a category. 

Fix a Hopf algebra $A$. 
Let $\Bc(A)$ be the full subcategory of the category of bijective $1$-cocycles with objects $\pi\colon H\to A$. 

\begin{thm}
    \label{thm:main}
    Let $A$ be a Hopf algebra. Then the categories $\Br(A)$ and $\Bc(A)$ are equivalent. 
	
	\begin{proof}
	    We claim that $F\colon\Br(A)\to\Bc(A)$ given by 
	    \begin{align*}
	    &F\Abrace=(\id_A\colon\As\to A), && F(f)=(f,f)\text{ for }f\colon \Abrace\to (A,\cdot,\circledast),
	    \end{align*}
	    is a functor.  
	    We prove that 
		$\pi=\id_A\colon \As\to A$ is a bijective $1$-cocycle. By Lemma \ref{lem:lambda}, $A$ is a $\As$-module-algebra and 
		\begin{align*}
			&\pi(a_1)(a_2\rightharpoonup \pi(b))=a_1S(a_2)(a_3\circ b)=a\circ b=\pi(a\circ b)&\text{for all }a,b\in A.
		\end{align*}
		Now $(f,f)$ is a homomorphism of $1$-cocycles since $f$ is a Hopf algebra homomorphism for both Hopf algebra structures. Hence the claim follows. 
		
		Now we define $G\colon\Bc(A)\to\Br(A)$ as follows. First
	    $G(\pi\colon H\to A)=\Abrace$, where the new multiplication is given by 
		\[
		a\circ b=\pi(\pi^{-1}(a)\pi^{-1}(b)),\quad
		a,b\in A.
		\]
		Let us prove that $\Abrace$ is a Hopf brace. First $(A,\circ,1,\Delta,\epsilon,T)$ is a Hopf algebra for
		$T=\pi S\pi^{-1}$ since $\pi$ is a coalgebra isomorphism. 
		To prove that $\Abrace$ is a Hopf brace let $a,b\in A$. Since $\pi$ is a $1$-cocycle,
		\begin{align*}
			(a_1\circ b)S(a_2)(a_3\circ c)&=\pi(\pi^{-1}(a_1)\pi^{-1}(b))S(a_2)\pi(\pi^{-1}(a_3)\pi^{-1}(c))\\
			&=a_1(\pi^{-1}(a_2)\rightharpoonup b)S(a_3)a_4(\pi^{-1}(a_5)\rightharpoonup c)\\
			&=a_1(\pi^{-1}(a_2)\rightharpoonup b)(\pi^{-1}(a_3)\rightharpoonup c)
			=a_1(\pi^{-1}(a_2)\rightharpoonup (bc)).
		\end{align*}
        Similarly
		\[
		a\circ (bc)=\pi(\pi^{-1}(a)\pi^{-1}(bc))=\pi(\pi^{-1}(a)_1)(\pi^{-1}(a)_2\rightharpoonup (bc)). 
		\]
		Thus $\Abrace$ is a Hopf brace since $\pi$ is a coalgebra homomorphism. 
		
		For $(f,g)$ a morphism between bijectives $1$-cocycles $\pi$ and $\eta$  
		we define $G(f,g)=g$. For  $a,b\in A$ one computes 
		\begin{align*}
		    g(a\circ b)&=g(\pi(\pi^{-1}(a)\pi^{-1}(b))=\eta f(\pi^{-1}(a)\pi^{-1}(b))\\
		    &=\eta (f\pi^{-1}(a)f\pi^{-1}(b))
		    =\eta(\eta^{-1}g(a)\eta^{-1}g(b))=g(a)\circledast g(b).
		\end{align*}
		Thus it follows that $G$ is a functor. 
		
		Clearly $GF=\id_{\Br(A)}$ and $FG\simeq\id_{\Bc(A)}$. 
	\end{proof}
\end{thm}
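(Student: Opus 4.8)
The plan is to establish the equivalence by exhibiting two functors $F\colon\Br(A)\to\Bc(A)$ and $G\colon\Bc(A)\to\Bc(A)$ and checking that their composites are naturally isomorphic to the identity. The guiding idea is that a Hopf brace in $\Br(A)$ is entirely determined by the second multiplication $\circ$, and that the identity map $\id_A$, viewed as a map from $\As$ to $A$, is automatically a coalgebra isomorphism (both structures share $\Delta,\epsilon$); what remains is the cocycle identity, which is exactly the content of Lemma~\ref{lem:lambda} once we use the action $a\rightharpoonup b=S(a_1)(a_2\circ b)$. So for $F$ I would set $F\Abrace=(\id_A\colon\As\to A)$ on objects and $F(f)=(f,f)$ on morphisms, then verify (i) the cocycle condition $\pi(a_1)(a_2\rightharpoonup\pi(b))=\pi(a\circ b)$ reduces to $a_1S(a_2)(a_3\circ b)=a\circ b$, which holds by the counit axiom, and (ii) that $(f,f)$ is a morphism of $1$-cocycles because $f$ is simultaneously a map of both Hopf structures and hence intertwines the actions.

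For $G$ I would start from a bijective $1$-cocycle $\pi\colon H\to A$ and define a second operation on $A$ by transport of structure, $a\circ b=\pi(\pi^{-1}(a)\,\pi^{-1}(b))$, with antipode $T=\pi S_H\pi^{-1}$. Since $\pi$ is a coalgebra isomorphism, $(A,\circ,1,\Delta,\epsilon,T)$ is automatically a Hopf algebra, so the only real thing to check is the brace compatibility~\eqref{eq:brace}. I would expand the right-hand side $(a_1\circ b)S(a_2)(a_3\circ c)$ using the definition of $\circ$ and the cocycle identity for $\pi$, collapse the $S(a_2)a_?$ terms via the antipode axiom, and compare with the similar expansion of $a\circ(bc)$; both sides should equal $a_1(\pi^{-1}(a_2)\rightharpoonup(bc))$, using cocommutativity of the comultiplication in the sense that the Sweedler legs coming from $\Delta$ on $A$ and on $H$ via $\pi$ agree. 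On morphisms, $G(f,g)=g$, and one checks $g(a\circ b)=g(a)\circledast g(b)$ by pushing $g$ through $\pi$ and $\eta$ using the relation $\eta f=g\pi$.

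Finally I would verify that $GF=\id_{\Br(A)}$ strictly — because $F$ records $\circ$ via $\pi=\id_A$ and $G$ recovers $a\circ b=\id_A(\id_A^{-1}(a)\,\id_A^{-1}(b))=a\circ b$, with $T$ reconstructed as $S$-conjugated by $\id$, i.e.\ $T$ itself — and that $FG\cong\id_{\Bc(A)}$ via the natural isomorphism whose component at $\pi\colon H\to A$ is the pair $(\pi,\id_A)\colon (\id_A\colon A_\circ\to A)\to(\pi\colon H\to A)$, which is well-defined precisely because $\pi$ becomes a morphism of Hopf algebras $A_\circ\to H$ once $A_\circ$ carries the transported structure, and is compatible with the actions by construction.

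The main obstacle I anticipate is the brace-compatibility verification inside $G$: one must be careful that the action $a\rightharpoonup b$ appearing in the cocycle condition for $\pi$ (an $H$-action on $A$) is correctly matched, under $\pi^{-1}$, with the Sweedler components of elements of $A$, and that the antipode cancellations are performed with the right coalgebra structure. Keeping track of which $\Delta$ (that of $A$, or that of $H$ transported along $\pi$) is being used at each step is the delicate bookkeeping; everything else is a routine application of the Hopf and module-algebra axioms.
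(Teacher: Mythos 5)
Your proposal is correct and follows essentially the same route as the paper: the same functor $F$ via the identity $1$-cocycle (justified by Lemma~\ref{lem:lambda}), the same transport-of-structure functor $G$ with $a\circ b=\pi(\pi^{-1}(a)\pi^{-1}(b))$ and $T=\pi S\pi^{-1}$, and the same verifications of the brace compatibility and of $GF=\id$, $FG\simeq\id$. One small correction: the agreement of Sweedler legs you invoke when checking the compatibility for $G$ is not cocommutativity but simply the fact that $\pi$ is a coalgebra isomorphism, so $\pi^{-1}(a)_1\otimes\pi^{-1}(a)_2=\pi^{-1}(a_1)\otimes\pi^{-1}(a_2)$; no cocommutativity hypothesis is available (or needed) in this theorem.
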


\section{Cocommutative Hopf Braces}
\label{cocommutative}

\begin{defn}
A Hopf brace $\Abrace$ is said to be \emph{cocommutative} if the underlying coalgebra $(A,\Delta)$ is cocommutative.
\end{defn}

\begin{lem}
\label{lem:leftright}
Let $\Abrace$ be a cocommutative Hopf brace. Then the following hold:
\begin{enumerate}
    \item $A$ is a right $\As$-module-coalgebra via 
    $a\leftharpoonup b=T(a_1\rightharpoonup b_1)\circ a_2\circ b_2$, $a,b\in A.$
    \item $A$ is a left $\As$-module-coalgebra via $\rightharpoonup$.
    \item  $S(a\rightharpoonup b)=a\rightharpoonup S(b)$ for all  $a,b\in A$.
\end{enumerate}
\end{lem}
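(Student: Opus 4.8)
The plan is to derive all three items from cocommutativity of $(A,\Delta)$ together with Lemmas~\ref{lem:truco} and~\ref{lem:lambda}, using two standard consequences of cocommutativity: the antipodes $S$ and $T$ of the two Hopf algebra structures are coalgebra homomorphisms, and $S^2=\id=T^2$. For (2), observe that being a left $\As$-module-coalgebra only adds to Lemma~\ref{lem:lambda} the requirement that $\rightharpoonup\colon\As\otimes A\to A$ be a morphism of coalgebras. Writing $a\rightharpoonup b=S(a_1)(a_2\circ b)$ exhibits $\rightharpoonup$ as the composite of $\Delta\otimes\id$, $S\otimes\id\otimes\id$, $\id$ tensored with the $\circ$-multiplication, and the $\cdot$-multiplication; each of these is a coalgebra map ($\Delta$ because $A$ is a coalgebra, $S$ because $A$ is cocommutative, the two multiplications because $(A,\cdot,\Delta,\epsilon)$ and $(A,\circ,\Delta,\epsilon)$ are bialgebras), so $\rightharpoonup$ is a coalgebra map. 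Unwinding the composite and reordering Sweedler legs by cocommutativity yields $\Delta(a\rightharpoonup b)=(a_1\rightharpoonup b_1)\otimes(a_2\rightharpoonup b_2)$ and $\epsilon(a\rightharpoonup b)=\epsilon(a)\epsilon(b)$.

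For (3) I would compute directly
\[
S(a\rightharpoonup b)=S\bigl(S(a_1)\,(a_2\circ b)\bigr)=S(a_2\circ b)\,S^2(a_1)=S(a_2\circ b)\,a_1=S(a_1\circ b)\,a_2,
\]
using anti-multiplicativity of $S$, then $S^2=\id$, then cocommutativity to exchange the two legs of $\Delta(a)$; Lemma~\ref{lem:truco} rewrites the last term as $S(a_1)(a_2\circ S(b))=a\rightharpoonup S(b)$. For (1), the coalgebra-map part is handled exactly as in (2): the formula $a\leftharpoonup b=T(a_1\rightharpoonup b_1)\circ a_2\circ b_2$ presents $\leftharpoonup\colon A\otimes A\to A$ as a composite of comultiplications, a flip, the map $\rightharpoonup$ (a coalgebra map by (2)), the antipode $T$ (a coalgebra map by cocommutativity) and iterated $\circ$-multiplication, hence $\leftharpoonup$ is a coalgebra map. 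The unit axiom is immediate: $a\leftharpoonup 1=T(a_1\rightharpoonup 1)\circ a_2\circ 1=T(\epsilon(a_1)1)\circ a_2=\epsilon(a_1)a_2=a$, using $a\rightharpoonup 1=\epsilon(a)1$ from Lemma~\ref{lem:lambda}, $T(1)=1$ and $1\circ x=x$.

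The substantive point in (1) is associativity of the action, $(a\leftharpoonup b)\leftharpoonup c=a\leftharpoonup(b\circ c)$. Here I would first isolate the auxiliary identity $(a_1\rightharpoonup b_1)\circ(a_2\leftharpoonup b_2)=a\circ b$: by (2) the element $a\rightharpoonup b$ has coproduct $(a_1\rightharpoonup b_1)\otimes(a_2\rightharpoonup b_2)$, so $(a_1\rightharpoonup b_1)\circ T(a_2\rightharpoonup b_2)=\epsilon(a)\epsilon(b)1$ by the antipode axiom for $\circ$, and substituting the definition of $\leftharpoonup$ gives the claim. Then I would expand both sides of the associativity identity using the two cocycle-type consequences of Lemma~\ref{lem:lambda}, namely $(u\circ v)\rightharpoonup w=u\rightharpoonup(v\rightharpoonup w)$ and, combining this with $a\circ b=a_1(a_2\rightharpoonup b)$ from Remark~\ref{rem:a*b,ab}, the formula $x\rightharpoonup(y\circ z)=(x_1\rightharpoonup y_1)\bigl((x_2\circ y_2)\rightharpoonup z\bigr)$, together with the auxiliary identity, $T^2=\id$, and cocommutativity to align Sweedler indices.

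The main obstacle is precisely this last computation: it is the only step that is not pure bialgebra/antipode formalism specialized to the cocommutative setting, and it requires careful tracking of several coproduct legs and repeated appeals to cocommutativity to bring the two sides into the same form. Once the auxiliary identity and the two $\rightharpoonup$-identities are recorded, however, it becomes a mechanical, if somewhat lengthy, verification.
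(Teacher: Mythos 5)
Your proposal is correct and follows essentially the same route as the paper: items (2) and (3) are handled identically (composite of coalgebra maps for (2); $S^2=\id$ plus Lemma~\ref{lem:truco} for (3)), and your plan for $(a\leftharpoonup b)\leftharpoonup c=a\leftharpoonup(b\circ c)$ --- via $(u\circ v)\rightharpoonup w=u\rightharpoonup(v\rightharpoonup w)$, $x\rightharpoonup(y\circ z)=(x_1\rightharpoonup y_1)\bigl((x_2\circ y_2)\rightharpoonup z\bigr)$, Remark~\ref{rem:a*b,ab}, anti-multiplicativity of $T$ and cocommutativity --- is exactly the paper's chain of equalities, merely repackaged with that second identity isolated as a named step. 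The ingredients you list do suffice to close the computation you leave as ``mechanical''.
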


\begin{proof}
    Clearly $a\leftharpoonup 1=a$ for all $a\in A$. Now let $a,b,c\in A$. Using the cocommutativity of $A$, the fact that $S$ and $T$ are antipodes and Remark \ref{rem:a*b,ab} one obtains:
    \begin{align*}
        (a\leftharpoonup b)\leftharpoonup c &= T\left( (T(a_1\rightharpoonup b_1)\circ a_2\circ b_2)\rightharpoonup c_1\right)\circ T(a_3\rightharpoonup )\circ a_4\circ b_4\circ c_2\\
        &=T\left( (a_1\rightharpoonup b_1)\circ \left( T(a_2\rightharpoonup b_2)\rightharpoonup ((a_3\circ b_3)\rightharpoonup c_1)\right)\right)\circ a_4\circ b_4\circ c_2\\
        &=T\left((a_1\rightharpoonup b_1)\left((a_2\circ b_2)\rightharpoonup c_1\right)\right)\circ a_3\circ b_3\circ c_2\\
        &=T\left(S(a_1)(a_2\circ b_1)S(a_3\circ b_2)(a_4\circ b_3\circ c_1)\right)\circ a_5\circ b_4\circ c_2\\
        &=T\left(S(a_1)(a_2\circ b_1\circ c_1)\right)\circ a_3\circ b_2\circ c_2\\
        &=T\left(a_1\rightharpoonup (b_1\circ c_1)\right)\circ a_2\circ (b_2\circ c_2)=a\leftharpoonup (b\circ c).
    \end{align*}
Now, the first two items follow from the cocommutativity of $A$.

Finally, since $A$ is cocommutative, $S^2=\id_A$. Then Lemma \ref{lem:truco} implies 
\begin{align*}
S(a\rightharpoonup b)&=S\left( S(a_1)(a_2\circ b)\right)=S(a_1\circ b)a_2=S(a_1)(a_2\circ S(b))=a\rightharpoonup S(b)
\end{align*}
for all $a,b\in A$.
\end{proof}

The following is the Hopf-theoretic version of~\cite[Proposition
2.2]{MR1722951},~\cite[Theorem 6]{MR1769723} and~\cite[Theorem
2.3(iv)]{MR1809284}.

\begin{thm}
\label{thm:trenzas}
	Let $\Abrace$ be a cocommutative Hopf brace. Then $c$ and the braiding
	$\sigma\colon A\otimes A\to A\otimes A$ given by $\sigma(a\otimes
	b)=b_1\otimes S(b_2)ab_3$ produce isomorphic representations of the Braid
	group $\mathbb{B}_n$ on $A^{\otimes n}$ for all $n\in\N_{\geq2}$. 
\end{thm}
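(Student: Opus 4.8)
The plan is to produce, for each $n\ge 2$, an explicit linear isomorphism $J_n\colon A^{\otimes n}\to A^{\otimes n}$ intertwining the two $\mathbb{B}_n$-actions, i.e.\ $J_n\circ c_i=\sigma_i\circ J_n$ for every elementary braiding $c_i=\id^{\otimes(i-1)}\otimes c\otimes\id^{\otimes(n-i-1)}$ and $\sigma_i=\id^{\otimes(i-1)}\otimes\sigma\otimes\id^{\otimes(n-i-1)}$, $1\le i\le n-1$; since these generate $\mathbb{B}_n$ this gives the asserted isomorphism of representations. Throughout I write $\lambda_a(b):=a\rightharpoonup b$, so that Lemma~\ref{lem:lambda} says $\lambda_{a\circ b}=\lambda_a\lambda_b$ (in the Sweedler sense) and each $\lambda_a$ is an algebra endomorphism. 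I also recall that by Corollary~\ref{cor:YB} the operator is $c(a\otimes b)=(a_1\rightharpoonup b_1)\otimes(a_2\leftharpoonup b_2)$ with $\leftharpoonup$ as in Lemma~\ref{lem:leftright}(1), and that its two tensor factors $\circ$-multiply back to $a\circ b$ (using that $\rightharpoonup$ is a module-coalgebra structure, Lemma~\ref{lem:leftright}(2), and that $T$ is the $\circ$-antipode).

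For $n=2$ I would take $J_2(a\otimes b)=a_1\otimes(a_2\rightharpoonup b)$. This is a coalgebra morphism because $\rightharpoonup$ is one (Lemma~\ref{lem:leftright}(2)), and it is bijective with inverse $x\otimes y\mapsto x_1\otimes(T(x_2)\rightharpoonup y)$, as one checks from $(a\circ b)\rightharpoonup c=a\rightharpoonup(b\rightharpoonup c)$, $1\rightharpoonup c=c$ (Lemma~\ref{lem:lambda}) and the antipode axiom for $T$. The identity $J_2\circ c=\sigma\circ J_2$ is then a short Sweedler computation: applying the definition $x\rightharpoonup z=S(x_1)(x_2\circ z)$ to the second tensor factor of $J_2(c(a\otimes b))$ and cancelling a factor of the form $(a_?\rightharpoonup b_?)\circ T(a_?\rightharpoonup b_?)$ via the module-coalgebra property, one gets $(a_1\rightharpoonup b_1)\otimes S(a_2\rightharpoonup b_2)(a_3\circ b_3)$; expanding $\sigma(J_2(a\otimes b))$ and using $a\circ b=a_1(a_2\rightharpoonup b)$ (Remark~\ref{rem:a*b,ab}) together with cocommutativity to reorder the Sweedler components of $a$ yields the same expression. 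In set-theoretic language this is the clean identity $\lambda_{\lambda_a(b)}\big(T(\lambda_a(b))\circ a\circ b\big)=\lambda_a(b)^{-1}\,a\,\lambda_a(b)$, immediate from $\lambda_x(y)=x^{-1}(x\circ y)$ and $\lambda_a(b)\circ T(\lambda_a(b))=1$.

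For general $n$, I define $J_n$ so that its $k$-th tensor factor is $(a^1\circ\cdots\circ a^{k-1})\rightharpoonup a^k$ (empty product $=1$, so the first factor is $a^1$), Sweedler indices distributed in the evident way; in the set-theoretic picture this sends $(a^1,\dots,a^n)$ to $\big(\lambda_{a^1\circ\cdots\circ a^{k-1}}(a^k)\big)_{k}$. Again $J_n$ is a coalgebra morphism (built from $\circ$, $\Delta$, $\rightharpoonup$) and bijective, being triangular: the $k$-th component determines $a^k$ once $a^1,\dots,a^{k-1}$ are known, since $x\rightharpoonup(-)$ is invertible with inverse $T(x)\rightharpoonup(-)$. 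To check $J_n\circ c_\ell=\sigma_\ell\circ J_n$, note that $c_\ell$ changes only the $\ell$-th and $(\ell+1)$-st factors of its argument, from $(a^\ell,a^{\ell+1})$ to $(a^\ell\rightharpoonup a^{\ell+1},\,a^\ell\leftharpoonup a^{\ell+1})$, and preserves their $\circ$-product. Hence in $J_n(c_\ell(a^1\otimes\cdots\otimes a^n))$ all factors except the $\ell$-th and $(\ell+1)$-st are unchanged (the prefix $\circ$-products they involve are untouched); the $\ell$-th factor becomes $(a^1\circ\cdots\circ a^\ell)\rightharpoonup a^{\ell+1}$ by Lemma~\ref{lem:lambda}; and the $(\ell+1)$-st factor, after applying the homomorphism property of $\lambda$ and the fact that each $\lambda_x$ is an algebra map, reduces via the $n=2$ identity to exactly the $(\ell+1)$-st factor of $\sigma_\ell(J_n(a^1\otimes\cdots\otimes a^n))$; the remaining factors of the latter are unchanged by $\sigma_\ell$. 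So $J_n\circ c_\ell=\sigma_\ell\circ J_n$ for all $\ell$, which completes the proof.

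I expect the only genuine difficulty to be notational: propagating the Sweedler bookkeeping through the definition of $J_n$ and through the compatibility $J_n c_\ell=\sigma_\ell J_n$, where $c_\ell$ acts locally but the prefix $\circ$-products occurring in $J_n$ are affected at every later position. The conceptual input is just Lemma~\ref{lem:lambda} together with the two-variable identity of the previous paragraph; this is the Hopf-algebraic analogue of the structure-group arguments of~\cite{MR1722951,MR1769723,MR1809284}. In the write-up it is probably cleanest to treat $n=3$ in full (there $\ell=1$ reduces verbatim to the $n=2$ case, while $\ell=2$ additionally uses $\lambda_{a\circ b}=\lambda_a\lambda_b$ and that $\lambda_a$ is an algebra automorphism) and then observe that the general $n$ is formally identical.
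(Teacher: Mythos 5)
Your proof is correct and follows essentially the same route as the paper: your $J_n$ is exactly the paper's intertwiner $\gamma_n$ (defined there recursively as $\gamma_n=\mu_n(\id_A\otimes\gamma_{n-1})$ but with the same closed form, each factor being a prefix $\circ$-product acting on the next entry), the $n=2$ Sweedler computation is the same, and your locality/preservation-of-the-$\circ$-product argument for general $\ell$ is the paper's commutation relation $\mu_n\sigma_{i,i+1}=\sigma_{i,i+1}\mu_n$ in different packaging. The only ingredient you leave implicit is Lemma~\ref{lem:leftright}(3), $S(a\rightharpoonup b)=a\rightharpoonup S(b)$, which is needed to push the prefix action past the antipode when matching the $(\ell+1)$-st factors.
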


\begin{proof}
	For $n\geq2$ we define $\gamma_n,\mu_n\colon A^{\otimes}\to A^{\otimes}$ as follows: 
	\begin{align*}
		& \mu_n(a^{(1)}\otimes\cdots\otimes a^{(n)})=a^{(1)}_1\otimes a^{(1)}_2\rightharpoonup a^{(2)}\otimes\cdots\otimes a^{(1)}_n\rightharpoonup a^{(n)},
	\end{align*}
	$\gamma_2=\gamma\colon A\otimes A\to A\otimes A$,  be given by $\gamma(x\otimes
	y)=x_1\otimes (x_2\rightharpoonup y)$ and recursively $\gamma_n=\mu_n(\id_A\otimes \gamma_{n-1})$ for $n>2$. 
	Since $\gamma$ is invertible with
	inverse $\gamma^{-1}(x\otimes y)=x_1\otimes (T(x_2)\rightharpoonup y)$ and 
	the $\mu_n$ are invertible, it follows by induction that the $\gamma_n$ are invertible. 
	A direct calculation using Lemmas \ref{lem:lambda} and~\ref{lem:leftright}(3) and the cocommutativity of $A$ shows that 
	\begin{align}
		\label{eq:mu_sigma}
		&\mu_n\sigma_{i,i+1}=\sigma_{i,i+1}\mu_n&&\text{for all }n\geq2,\,i\in\{2,\dots,n-1\}.
	\end{align}
	We claim that
	\begin{align}
		\label{eq:gamma_c}
		\gamma_n c_{i,i+1}=\sigma_{i,i+1}\gamma_n&&\text{for all }n\geq2,\,i\in\{1,\dots,n-1\}.
	\end{align}

	We proceed by induction on $n$. Assume $n=2$. 
    We claim that $\gamma c=\sigma\gamma$. 
    Using that $A$ is cocommutative, $S^2=\id_A$ and third  part of Lemma \ref{lem:leftright},
    \begin{align*}
        \gamma c\gamma^{-1}(x\otimes y) &= \gamma c(x_1\otimes (T(x_2)\rightharpoonup y))\\
        &=\gamma\left( S(x_1)(x_2\circ (T(x_3)\rightharpoonup y_1))\otimes  T(y_2)\circ x_4\circ (T(x_5)\rightharpoonup y_3)\right)\\
        &=\gamma(S(x_1)x_2y_1\otimes T(y_2)\circ (xy_3))
        =\gamma(y_1\otimes (T(y_2)\circ x)ST(y_3))\\
        &=y_1\otimes S(y_2)(y_3\circ ((T(y_4)\circ x)ST(y_5)))\\
        &=y_1\otimes S(y_2)xS(y_3)(y_4\circ ST(y_5))
        =y_1\otimes S(y_2)x(y_3\rightharpoonup ST(y_4))\\
        &=y_1\otimes S(y_2)xS(y_3\rightharpoonup T(y_4))
        =y_1\otimes S(y_2)xy_3.
    \end{align*}
	Assume now that the claim holds for $n\geq2$.  
	The case $i=1$ follows immediately from $\gamma c=\sigma\gamma$ since 
	\begin{align*}
		\gamma_n(a^{(1)}\otimes \cdots\otimes a^{(n)})=a^{(1)}_1\otimes a^{(1)}_2\rightharpoonup a^{(2)}_1\otimes\cdots\otimes (a^{(1)}_n\circ\cdots\circ a^{(n-1)}_2)\rightharpoonup a^{(n)}.
	\end{align*}
	For $i>1$ the inductive hypothesis and \eqref{eq:mu_sigma} yield 
	\begin{align*}
		\gamma_{n+1} c_{i,i+1}=\mu_{n+1}(\id_A\otimes \gamma_{n})c_{i,i+1}=\mu_{n+1}\sigma_{i,i+1}(\id_A\otimes \gamma_{n})=\sigma_{i,i+1}\gamma_{n+1}.
	\end{align*}
	This completes the proof of \eqref{eq:gamma_c} and hence the claim follows.
\end{proof}

We now prove that cocommutative Hopf braces produce Yang--Baxter operators.

\begin{cor}
	\label{cor:YB}
	Let $\Abrace$ be cocommutative Hopf brace. Then $c\colon A\otimes A\to A\otimes A$,
	\begin{align*} 
		&c(x\otimes y)=(x_1\rightharpoonup y_1)\otimes (x_2\leftharpoonup y_2),&& x,y\in A,
	\end{align*}
	is a coalgebra isomorphism and a solution of the braid equation.
\end{cor}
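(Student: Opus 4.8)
The plan is to deduce Corollary~\ref{cor:YB} from Theorem~\ref{thm:trenzas} by identifying the map $c$ of the corollary with the conjugate $\gamma_2^{-1}\sigma_2\gamma_2$ acting on $A\otimes A$, where $\sigma_2=\sigma$ is the braiding from the theorem and $\gamma_2=\gamma$. First I would observe that, by Theorem~\ref{thm:trenzas} applied with $n=2$, $\sigma$ satisfies the braid equation on $A^{\otimes 2}$ (it is the standard Yetter--Drinfeld--type braiding on the Hopf algebra $A$, so this is classical), and $\gamma\colon A\otimes A\to A\otimes A$ is a coalgebra isomorphism intertwining $c$ with $\sigma$, i.e. $\gamma c=\sigma\gamma$. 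Hence $c=\gamma^{-1}\sigma\gamma$ is a coalgebra isomorphism and inherits the braid equation from $\sigma$: indeed if $\sigma_{12}\sigma_{23}\sigma_{12}=\sigma_{23}\sigma_{12}\sigma_{23}$ on $A^{\otimes3}$, then conjugating by $\gamma_3$ (which is built from $\gamma$ and the $\mu_n$) and using \eqref{eq:gamma_c} transports this identity to $c$.

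The remaining, and really the only substantive, task is to check that the explicit formula $c(x\otimes y)=(x_1\rightharpoonup y_1)\otimes(x_2\leftharpoonup y_2)$ given in the corollary actually equals $\gamma^{-1}\sigma\gamma$. So I would compute $\gamma^{-1}\sigma\gamma(x\otimes y)$ directly. Applying $\gamma$ gives $x_1\otimes(x_2\rightharpoonup y)$; applying $\sigma$ to this (with the ``first'' tensorand $x_1$ and ``second'' tensorand $x_2\rightharpoonup y$, remembering $\sigma$ acts by $\sigma(a\otimes b)=b_1\otimes S(b_2)ab_3$) and then applying $\gamma^{-1}=\big(x\otimes y\mapsto x_1\otimes(T(x_2)\rightharpoonup y)\big)$ should, after using cocommutativity, Lemma~\ref{lem:lambda}, Lemma~\ref{lem:leftright} (in particular the formula $a\leftharpoonup b=T(a_1\rightharpoonup b_1)\circ a_2\circ b_2$ and part (3), $S(a\rightharpoonup b)=a\rightharpoonup S(b)$), and Remark~\ref{rem:a*b,ab}, collapse to exactly $(x_1\rightharpoonup y_1)\otimes(x_2\leftharpoonup y_2)$. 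The bookkeeping of Sweedler indices here is the main obstacle: both $\rightharpoonup$ and $\leftharpoonup$ are module structures for the $\circ$-Hopf algebra while $S$, $T$ interact through Lemma~\ref{lem:truco}, so one must be careful to keep the comultiplications of $x$ and $y$ aligned correctly, and cocommutativity is used repeatedly to reorder legs.

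An alternative, more self-contained route would be to verify the braid equation for $c$ by a direct (long) computation on $A^{\otimes3}$, expanding $c_{12}c_{23}c_{12}$ and $c_{23}c_{12}c_{23}$ using the definitions of $\rightharpoonup$ and $\leftharpoonup$ and the compatibility \eqref{eq:brace}; but this duplicates the content of Theorem~\ref{thm:trenzas} and is messier, so I would only fall back on it if the identification $c=\gamma^{-1}\sigma\gamma$ turned out to be awkward to state cleanly. For the coalgebra-isomorphism claim, note that $\gamma$, $\sigma$ and $\gamma^{-1}$ are all coalgebra morphisms (for $\sigma$ this is a short check using cocommutativity; for $\gamma$ it follows since $\rightharpoonup$ makes $A$ a module-coalgebra by Lemma~\ref{lem:leftright}(2)), hence so is their composite $c$, and invertibility of $c$ is immediate from invertibility of each factor. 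I expect the whole argument to be short modulo the one Sweedler-calculus verification identifying the two formulas for $c$.
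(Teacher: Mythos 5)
Your proposal is correct and takes essentially the same route as the paper: the paper's proof of Corollary~\ref{cor:YB} simply invokes Theorem~\ref{thm:trenzas} for the braid equation (the identification $\gamma c=\sigma\gamma$, and more generally $\gamma_n c_{i,i+1}=\sigma_{i,i+1}\gamma_n$, that you propose to verify is exactly \eqref{eq:gamma_c}, already established by the Sweedler computation inside that theorem's proof) and gets the coalgebra-isomorphism claim from the fact that $\rightharpoonup$ and $\leftharpoonup$ are coalgebra maps by Lemma~\ref{lem:leftright}.
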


\begin{proof}
Theorem~\ref{thm:trenzas} implies that $c$ is an invertible solution of the braid equation. Since both actions are coalgebra maps, $c$ 
is a coalgebra isomorphism.
\end{proof}

The following corollary generalizes~\cite[Proposition 4]{MR1769723}. 

\begin{cor}
	Let $\Abrace$ be a cocommutative Hopf brace. Then $c^2=\id$ if and
	only if $A$ is commutative.
\end{cor}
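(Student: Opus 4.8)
The plan is to transport the question to the auxiliary operator $\sigma$ of Theorem~\ref{thm:trenzas}, for which the relevant computation is immediate. By Theorem~\ref{thm:trenzas} with $n=2$ (so that $\mathbb{B}_2\cong\Z$), the operators $c$ and $\sigma(a\otimes b)=b_1\otimes S(b_2)ab_3$ generate \emph{isomorphic} representations of $\mathbb{B}_2$ on $A\otimes A$; concretely $c=\gamma_2^{-1}\sigma\gamma_2$ with $\gamma_2$ invertible (equation~\eqref{eq:gamma_c} with $n=2$). Hence $c^2=\id_{A\otimes A}$ if and only if $\sigma^2=\id_{A\otimes A}$, and it suffices to prove that $\sigma^2=\id$ is equivalent to commutativity of $A$.

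For the implication $(\Leftarrow)$, if $A$ is commutative then $\sigma(a\otimes b)=b_1\otimes S(b_2)ab_3=b_1\otimes a\,S(b_2)b_3=b_1\otimes a\,\epsilon(b_2)=b\otimes a$, so $\sigma$ is the flip and $\sigma^2=\id$, whence $c^2=\id$. For $(\Rightarrow)$, assume $\sigma^2=\id$. Since $\epsilon$ is an algebra map, a one-line computation gives $(\id_A\otimes\epsilon)\sigma(x\otimes y)=\epsilon(x)\,y$, and feeding $\sigma(a\otimes b)$ back into this identity yields $(\id_A\otimes\epsilon)\sigma^2(a\otimes b)=\epsilon(b_1)S(b_2)ab_3=S(b_1)ab_2$. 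Comparing with $(\id_A\otimes\epsilon)(a\otimes b)=\epsilon(b)a$ forces the identity $S(b_1)ab_2=\epsilon(b)a$ for all $a,b\in A$, i.e. the adjoint action of $A$ on itself is trivial. To finish I evaluate $b_1S(b_2)ab_3$ in two ways: associating as $(b_1S(b_2))(ab_3)$ and using $\sum c_1S(c_2)=\epsilon(c)1$ (applied to the first leg of $\Delta(b)$) produces $ab$, while associating as $b_1(S(b_2)ab_3)$ and using the identity just obtained, $S(c_1)ac_2=\epsilon(c)a$ (applied to the second leg of $\Delta(b)$), produces $ba$. Hence $ab=ba$ for all $a,b\in A$, so $A$ is commutative.

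The only genuine idea here is the first reduction: attacking $c$ directly would force one to unwind both $\rightharpoonup$ and the heavier action $\leftharpoonup$, whereas after moving to $\sigma$ the calculation collapses as soon as $\id\otimes\epsilon$ is applied. Everything else is bookkeeping with the counit and antipode axioms; the one point deserving a moment's care is the last step — that triviality of the adjoint action really forces commutativity rather than something weaker — which the two-way evaluation of $b_1S(b_2)ab_3$ settles. Note that, apart from invoking Theorem~\ref{thm:trenzas}, cocommutativity is not otherwise used in the argument.
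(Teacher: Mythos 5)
Your proof is correct and follows essentially the same route as the paper: reduce to $\sigma$ via the conjugation $c=\gamma^{-1}\sigma\gamma$ from Theorem~\ref{thm:trenzas}, apply $\id\otimes\epsilon$ to $\sigma^2=\id$ to extract the identity $S(b_1)ab_2=\epsilon(b)a$, and then derive $ab=ba$ (your two-way evaluation of $b_1S(b_2)ab_3$ is the same computation the paper packages as ``$\sigma$ is the flip and $xy=(m\sigma)(x\otimes y)$''). The only difference is that you spell out the easy converse and the final associativity step in more detail than the paper does.
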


\begin{proof}
	It is enough to prove that $\sigma^2=\id$ if and only if $A$ is commutative.
	If $\sigma^2=\id$, then applying $\id\otimes\epsilon$ one obtains
	$x\epsilon(y)=S(y_1)xy_2$ for all $x,y\in A$. Then $\sigma(x\otimes y)=y\otimes
	x$ and hence $xy=yx$ since $xy=(m\sigma)(x\otimes y)$. The converse is
	clear.
\end{proof}

\section{Matched pairs of cocommutative Hopf algebras}
\label{matched}
Let $H$ and $K$ be two commutative Hopf algebras.  Recall from~\cite{MR1321145}
that a \emph{matched pair} of cocommutative Hopf algebras is a pair $(H,K)$
with two actions \[
K\xleftarrow{\leftharpoonup}K\otimes
H\xrightarrow{\rightharpoonup}H
\]
such that $(H,\rightharpoonup)$ is a left
$K$-module coalgebra, $(K,\leftharpoonup)$ is a right $H$-module coalgebra, and 
\begin{align}
	\label{eq:matched1}&x\rightharpoonup (ab)=(x_1\rightharpoonup a_1)\left( (x_2\leftharpoonup a_2)\rightharpoonup b \right),\\
	\label{eq:matched2}&(xy)\leftharpoonup a=(x\leftharpoonup (y_1\rightharpoonup a_1))(y_2\leftharpoonup a_2)
\end{align}
for all $a,b\in H$ and $x,y\in K$. 

If $(H,K)$ is a matched pair of cocommutative Hopf algebras, then the tensor
coalgebra $H\otimes K$ is a Hopf algebra with multiplication
\begin{align*}
	(a\otimes x)(b\otimes y)=a(x_1\rightharpoonup b_1)\otimes (x_2\leftharpoonup b_2)y,&& a,b\in H,\,x,y\in K.
\end{align*}

We now show that there is a correspondence between Hopf braces and certain 
matched pairs of cocommutative Hopf algebras. 
A similar result was proved by Gateva-Ivanova for classical braces, see~\cite[Theorem 3.7]{GI15}.  

\begin{pro}
	\label{pro:brace=>matched}
	Let $\Abrace$ be a cocommutative Hopf brace. Then
	$(\As,\As)$ is a matched pair of cocommutative Hopf algebras with 
	\begin{align*}
		&h\rightharpoonup k=S(h_1)(h_2\circ k),&&
		h\leftharpoonup k=T(h_1\rightharpoonup k_1)\circ h_2\circ k_2,&& h,k\in H. 
	\end{align*}
\end{pro}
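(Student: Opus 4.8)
The plan is to verify directly that the pair of actions
\[
h\rightharpoonup k=S(h_1)(h_2\circ k),\qquad h\leftharpoonup k=T(h_1\rightharpoonup k_1)\circ h_2\circ k_2
\]
satisfies the four axioms of a matched pair for the single cocommutative Hopf algebra $\As$. Two of these are already available: by Lemma~\ref{lem:lambda} the map $\rightharpoonup$ makes $A$ a left $\As$-module-algebra, and in particular a left $\As$-module-coalgebra (cocommutativity is what promotes module-algebra to module-coalgebra, and this is exactly Lemma~\ref{lem:leftright}(2)); similarly Lemma~\ref{lem:leftright}(1) shows that $\leftharpoonup$ makes $A$ a right $\As$-module-coalgebra. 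So the real content is to establish the two compatibility identities \eqref{eq:matched1} and \eqref{eq:matched2} with all products interpreted in $\As$, i.e.
\[
x\rightharpoonup(b\circ c)=(x_1\rightharpoonup b_1)\circ\bigl((x_2\leftharpoonup b_2)\rightharpoonup c\bigr),\qquad
(x\circ y)\leftharpoonup a=\bigl(x\leftharpoonup(y_1\rightharpoonup a_1)\bigr)\circ(y_2\leftharpoonup a_2).
\]

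For the first identity I would start from the right-hand side and unfold the definition of $\leftharpoonup$: $x_2\leftharpoonup b_2=T(x_2\rightharpoonup b_2)\circ x_3\circ b_3$, so
\[
(x_2\leftharpoonup b_2)\rightharpoonup c=\bigl(T(x_2\rightharpoonup b_2)\circ x_3\circ b_3\bigr)\rightharpoonup c
=T(x_2\rightharpoonup b_2)\rightharpoonup\bigl((x_3\circ b_3)\rightharpoonup c\bigr),
\]
using the module property $(a\circ b)\rightharpoonup c=a\rightharpoonup(b\rightharpoonup c)$ from Lemma~\ref{lem:lambda}. Then the right-hand side becomes $(x_1\rightharpoonup b_1)\circ\bigl(T(x_2\rightharpoonup b_2)\rightharpoonup((x_3\circ b_3)\rightharpoonup c)\bigr)$, which by \eqref{eq:a*b} (the identity $a\circ b=a_1(a_2\rightharpoonup b)$, applied with $a=x_1\rightharpoonup b_1$ after using that $\rightharpoonup$ and $\Delta$ commute, $\Delta(x_1\rightharpoonup b_1)=(x_1\rightharpoonup b_1)\otimes(x_2\rightharpoonup b_2)$) collapses to $(x_1\rightharpoonup b_1)\cdot\bigl((x_2\circ b_2)\rightharpoonup c\bigr)$. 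Since $\rightharpoonup$ is a module-algebra action, $(x_1\rightharpoonup b_1)(x_2\rightharpoonup(\dots))$ recombines, and one more application of \eqref{eq:a*b} in the form $x\circ b=x_1(x_2\rightharpoonup b)$ should identify everything with $x\rightharpoonup(b\circ c)=x_1\rightharpoonup(b_1(b_2\rightharpoonup c))$ after using that $\rightharpoonup$ distributes over the $\cdot$-product. In fact the cleanest route is probably to observe that both sides equal $S(x_1)\circ\cdots$ — more precisely, one can massage the computation in the proof of Lemma~\ref{lem:leftright}(1) (which already manipulates exactly these expressions, e.g.\ the line $T(S(a_1)(a_2\circ b_1)S(a_3\circ b_2)(a_4\circ b_3\circ c_1))\circ\dots$) to extract \eqref{eq:matched1} as a byproduct.

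The second identity \eqref{eq:matched2} I expect to be the main obstacle, since it mixes $\circ$-products on the left-acting side with the $\rightharpoonup$-twisted arguments, and the definition of $\leftharpoonup$ is itself asymmetric in $S$ and $T$. The strategy here is to expand $(x\circ y)\leftharpoonup a=T((x\circ y)_1\rightharpoonup a_1)\circ(x\circ y)_2\circ a_2=T((x_1\circ y_1)\rightharpoonup a_1)\circ x_2\circ y_2\circ a_2$, rewrite $(x_1\circ y_1)\rightharpoonup a_1=x_1\rightharpoonup(y_1\rightharpoonup a_1)$ via Lemma~\ref{lem:lambda}, and then insert a resolution of the identity $y_2\circ a_2=(y_2\rightharpoonup a_2)_?\cdots$ — concretely, use \eqref{eq:a*b} to write $y\circ a$ in terms of $y_1(y_2\rightharpoonup a)$, and use Lemma~\ref{lem:leftright}(3), $S(a\rightharpoonup b)=a\rightharpoonup S(b)$, together with $S^2=\id$ (cocommutativity), to move antipodes past actions. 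On the right-hand side, $x\leftharpoonup(y_1\rightharpoonup a_1)=T\bigl(x_1\rightharpoonup(y_1\rightharpoonup a_1)_1\bigr)\circ x_2\circ(y_1\rightharpoonup a_1)_2$, and after splitting $\Delta(y_1\rightharpoonup a_1)=(y_1\rightharpoonup a_1)\otimes(y_2\rightharpoonup a_2)$ and combining with $y_3\leftharpoonup a_3$ one should, after cancellation of a $T(\cdot)\circ(\cdot)$ pair, land back on the left-hand side. This is a longish cocommutative Sweedler-notation computation of the same flavor as the proof of Lemma~\ref{lem:leftright}(1), and I would present it as such. Once both compatibilities are checked, the proposition is complete, since the module-coalgebra axioms are supplied by Lemma~\ref{lem:leftright}(1)--(2).
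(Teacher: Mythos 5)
Your proposal is correct and follows essentially the same route as the paper: the module-coalgebra axioms are imported from Lemma~\ref{lem:leftright}, \eqref{eq:matched1} is verified by unfolding $\leftharpoonup$ on the right-hand side, using $(a\circ b)\rightharpoonup c=a\rightharpoonup(b\rightharpoonup c)$, the cancellation $(x_1\rightharpoonup b_1)\circ T(x_2\rightharpoonup b_2)\circ x_3\circ b_3$ reducing to $x\circ b$, and \eqref{eq:a*b}; and \eqref{eq:matched2} follows by expanding both sides and cancelling the $T(\cdot)\circ(\cdot)$ pair, exactly as in the paper's ``similarly'' computation. The only cosmetic difference is your suggested detour through the computation in Lemma~\ref{lem:leftright}(1), which is unnecessary given the direct argument you already sketched.
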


\begin{proof}
	By Lemma~\ref{lem:leftright}, we need to prove~\eqref{eq:matched1}
	and~\eqref{eq:matched2}.  Let $h,x,y\in H$. Using Remark~\ref{rem:a*b,ab}
	and the cocommutativity,
	\begin{align*}
		(h_1\rightharpoonup x_1)&\circ \left( (h_2\leftharpoonup x_2)\rightharpoonup y \right)
		=(h_1\rightharpoonup x_1)( (h_2\rightharpoonup x_2)\rightharpoonup ( (h_3\leftharpoonup x_3)\rightharpoonup y)\\
		&=(h_1\rightharpoonup x_1)( ( (h_2\rightharpoonup x_2)\circ (h_3\leftharpoonup x_3))\rightharpoonup y)
		=(h_1\rightharpoonup x_1)(h_2\circ x_2\rightharpoonup y)\\
		&=(h_1\rightharpoonup x_1)(h_2\rightharpoonup (x_2\rightharpoonup y))
		=h\rightharpoonup (x_1(x_2\rightharpoonup y))
		=h\rightharpoonup (x\circ y).
	\end{align*}
	Similarly,
	\begin{align*}
		(x\leftharpoonup (y_1\rightharpoonup h_1))\circ (y_2\leftharpoonup h_2) 
		&=T(x_1\circ y_1\rightharpoonup h_1)\circ x_2\circ y_2\circ h_2
		=(x\circ y)\leftharpoonup h. 
	\end{align*}
	This completes the proof.
\end{proof}



Now we prove that matched pairs produce Hopf braces. 

\begin{pro}
	\label{pro:matched=>brace}
	Let $\AsHopf$ be a cocommutative Hopf algebra. Assume that $(A,A)$ 
	is a matched pair of cocommutative Hopf algebras with actions
	$\rightharpoonup$ and $\leftharpoonup$ and that $a\circ
	b=(a_1\rightharpoonup b_1)\circ (a_2\leftharpoonup b_2)$ for all $a,b\in
	A$.  Then $\Abrace$ is a cocommutative Hopf brace with
	\begin{align*}
		&ab=a_1\circ (T(a_2)\rightharpoonup b),&& S(a)=a_1\rightharpoonup T(a_2),&&a,b\in A.
	\end{align*}
\end{pro}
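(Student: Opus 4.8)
The plan is to establish that the proposed structure $\Abrace$ is a Hopf brace by verifying, in order, that $(A,\cdot,1,\Delta,\epsilon,S)$ is a Hopf algebra, that $1_\circ=1$, and finally that the compatibility \eqref{eq:brace} holds; cocommutativity of $(A,\Delta)$ is inherited for free and passes to $A$ since the new multiplication $\cdot$ is built from coalgebra maps. The essential observation driving everything is that from $a\circ b=(a_1\rightharpoonup b_1)\circ(a_2\leftharpoonup b_2)$ together with the matched-pair axioms one can invert the relation: namely, since $(A,\rightharpoonup)$ is a module coalgebra and $T$ is the antipode of $A_\circ$, one expects $a_1\circ(T(a_2)\rightharpoonup b)=?$ to telescope using the matched-pair compatibility \eqref{eq:matched1} and \eqref{eq:matched2} applied to the $\circ$-product. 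So the first concrete step is to prove the identity $a_1\circ(T(a_2)\rightharpoonup b)$ defines an associative, unital multiplication with unit $1$; associativity is where \eqref{eq:matched1}, \eqref{eq:matched2} and cocommutativity all get used, and I would push the $T$'s through the actions using the module-coalgebra axioms and the anti-coalgebra-map property of $T$.

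Next I would check that $S(a)=a_1\rightharpoonup T(a_2)$ is an antipode for $(A,\cdot)$. The cleanest route is to verify $S(a_1)\cdot a_2=\epsilon(a)1=a_1\cdot S(a_2)$ directly: expanding $S(a_1)\cdot a_2$ via the definition of $\cdot$ gives something like $(a_1\rightharpoonup T(a_2))_1\circ\bigl(T((a_1\rightharpoonup T(a_2))_2)\rightharpoonup a_3\bigr)$, and here one uses that $\rightharpoonup$ is a coalgebra map in the first and second variable to split the coproduct, then the fact that $T$ is the antipode of $A_\circ$ plus the matched-pair axiom \eqref{eq:matched1} to collapse to $\epsilon(a)1$. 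One also needs $S$ to be a coalgebra anti-homomorphism, but since $A$ is cocommutative that is automatic once $S$ is merely a coalgebra map, which follows from $\rightharpoonup$ and $T$ being coalgebra maps. That $1_\circ=1$ is either taken from the analog of Remark~\ref{rem:1=1*} or checked directly: $1\circ b=(1\rightharpoonup b_1)\circ(1\leftharpoonup b_2)=b_1\circ b_2$? — one must be careful here, so I would instead derive $1\cdot b=b$ and $a\cdot 1=a$ from the formula $ab=a_1\circ(T(a_2)\rightharpoonup b)$ using $T(1)=1$, $1\rightharpoonup b=b$, and $1\circ b=b$.

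Finally, the compatibility \eqref{eq:brace}, i.e. $a\circ(bc)=(a_1\circ b)S(a_2)(a_3\circ c)$, should follow by unwinding both sides through $ab=a_1\circ(T(a_2)\rightharpoonup b)$ and $S(a)=a_1\rightharpoonup T(a_2)$ and reducing to the matched-pair identities; equivalently, one can observe that the whole construction is the inverse of the passage in Proposition~\ref{pro:brace=>matched} and argue that $G$ (or rather the composite of these two constructions) is an equivalence, but a direct verification is more honest here. The main obstacle I anticipate is bookkeeping: every computation is a Sweedler-notation manipulation with several simultaneous coproduct legs, and the interplay of $\rightharpoonup$, $\leftharpoonup$, $\circ$, $S$, $T$ means one must repeatedly use cocommutativity to reorder legs and the module-coalgebra axioms to move antipodes across actions. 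The conceptually delicate point — and the one I would isolate as a lemma — is showing that $a\mapsto a_1\circ(T(a_2)\rightharpoonup -)$ inverts $a\circ b=(a_1\rightharpoonup b_1)\circ(a_2\leftharpoonup b_2)$ in the appropriate sense, since that is what makes $(A,\cdot)$ a group-like (Hopf) structure rather than merely a magma; this hinges on the matched-pair axiom \eqref{eq:matched2} controlling how $\leftharpoonup$ interacts with the $\circ$-product, combined with $T$ being a genuine antipode.
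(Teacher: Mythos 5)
Your plan follows essentially the same route as the paper's proof: the key lemma you isolate --- that $a\mapsto a_1\circ(T(a_2)\rightharpoonup{-})$ inverts the relation, i.e.\ $a_1(a_2\rightharpoonup b)=a\circ b$ --- is exactly the paper's Equation \eqref{eq:matched_a*b}, and the subsequent steps (associativity via \eqref{eq:matched1} together with the derived module-algebra identity $a\rightharpoonup(bc)=(a_1\rightharpoonup b)(a_2\rightharpoonup c)$, the one-sided antipode check $a_1S(a_2)=\epsilon(a)1$ with cocommutativity supplying the other side, and the compatibility \eqref{eq:brace} via $S(a_1)(a_2\circ b)=a\rightharpoonup b$) are precisely what the paper carries out. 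Your text is an outline rather than a worked proof, but the strategy and the identified crux coincide with the paper's argument, and your fallback of verifying unitality directly from $ab=a_1\circ(T(a_2)\rightharpoonup b)$ rather than from the $\circ$-decomposition is sound.
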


\begin{proof}
	We first notice that since $\rightharpoonup$ is of coalgebras it follows that 
	$\Delta(ab)=\Delta(a)\Delta(b)$ for all $a,b\in A$. Further, 
	\begin{equation}
	\label{eq:matched_a*b}
	\begin{aligned}
		a_1(a_2\rightharpoonup b)&=a_1\circ (T(a_2)\rightharpoonup (a_3\rightharpoonup b))\\
		&=a_1\circ ((T(a_2)\circ a_3)\rightharpoonup b)=a\circ (1\circ b)=a\circ b.
	\end{aligned}
	\end{equation}
	for all $a,b\in A$ 

	Let $a,b,c\in A$. Using the cocommutativity,~\eqref{eq:matched1} and~\eqref{eq:matched_a*b}, 
	\begin{align*}
		a\rightharpoonup (bc)&=a\rightharpoonup (b_1\circ (T(b_2)\rightharpoonup c))
		=(a_1\rightharpoonup b_1)\circ \left( (a_2\leftharpoonup b_2)\rightharpoonup (T(b_3)\rightharpoonup c \right))\\
		&=(a_1\rightharpoonup b_1)\left((a_2\rightharpoonup b_2)\rightharpoonup \left( (a_3\leftharpoonup b_3)\rightharpoonup (T(b_4)\rightharpoonup c) \right)\right)\\
		&=(a_1\rightharpoonup b_1)\left(((a_2\rightharpoonup b_2)\circ (a_3\leftharpoonup b_3))\rightharpoonup (T(b_4)\rightharpoonup c) \right)\\
		&=(a_1\rightharpoonup b_1)\left((a_2\circ b_2)\rightharpoonup (T(b_4)\rightharpoonup c)\right)\\
		&=(a_1\rightharpoonup b_1)\left(a_2\rightharpoonup (b_2\rightharpoonup (T(b_3)\rightharpoonup c))\right)\\
		&=(a_1\rightharpoonup b_1)\left(a_2\rightharpoonup ((b_2\circ T(b_3))\rightharpoonup c)\right)		=(a_1\rightharpoonup b)(a_2\rightharpoonup c).
	\end{align*}
	Now $a(bc)=(ab)c$ since 
	\begin{align*}
		a(bc)&=a_1\circ ( T(a_2)\rightharpoonup (bc))
		=a_1\circ ((T(a_2)\rightharpoonup b)(T(a_3)\rightharpoonup c))\\
		&=a_1\circ (T(a_2)\rightharpoonup b_1)\circ (T(T(a_3)\rightharpoonup b_2)\rightharpoonup (T(a_4)\rightharpoonup c))\\
		&=(a_1b_1)\circ ((T(T(a_3)\rightharpoonup b_2)\circ T(a_4))\rightharpoonup c)\\
		&=(a_1b_1)\circ (T(a_4\circ (T(a_3)\rightharpoonup b_2))\rightharpoonup c)
		=(a_1b_1)\circ (T(a_2b_2)\rightharpoonup c)=(ab)c.
	\end{align*}
	For $a\in A$ let $S(a)=a_1\rightharpoonup T(a_2)$. Since 
	\begin{align*}
		a_1S(a_2) &= a_1(a_2\rightharpoonup T(a_3))=a_1\circ (T(a_2)\rightharpoonup (a_3\rightharpoonup T(a_4))\\
		&=a_1\circ ( (T(a_2)\circ a_3)\rightharpoonup T(a_4))=a_1\circ T(a_2)=\epsilon(a)1
	\end{align*}
	for all $A\in A$, the cocommutativity and~\cite[Theorem 3(4)]{MR585730} imply 
	that the tuple  $\AHopf$ is a Hopf algebra. Now
	\begin{align*}
		&S(a_1)(a_2\circ b)=S(a_1)a_2(a_3\rightharpoonup b)=a\rightharpoonup b&\text{for all }a,b\in A.
	\end{align*}
	This equation implies that $\Abrace$ is a Hopf brace since
	\begin{multline*}
		(a_1\circ b)S(a_2)(a_3\circ c) = (a_1\circ b)(a_2\rightharpoonup c)=(a_1\circ b_1)\circ (T(a_2\circ b_2)\rightharpoonup (a_3\rightharpoonup c))\\
		=(a_1\circ b_1)\circ ((T(b_2)\circ T(a_2)\circ a_3)\rightharpoonup c)
		=a\circ (b_1\circ (T(b_2)\rightharpoonup c))=a\circ (bc)
	\end{multline*}
	for all $a,b,c\in A$. 
\end{proof}

Now, we will show that the correspondence of Propositions~\ref{pro:brace=>matched} and~\ref{pro:matched=>brace} is functorial. 
Let $\AHopf$ be a cocommutative Hopf algebra. Let $\Mp(A)$ be the category with objects the matched pairs $(A,A)$ such that $a\circ b=(a_1\rightharpoonup
b_1)\circ (a_2\leftharpoonup b_2)$ for all $a,b\in A$ and morphisms all 
Hopf algebra homomorphism $f\colon A\to A$ such that $f(a\rightharpoonup b)=f(a)\rightharpoonup f(b)$ $f(a\leftharpoonup b)=f(a)\leftharpoonup f(b)$ for all $a,b\in A$.

\begin{thm}
\label{thm:matched}
    Let $\AHopf$ be a cocommutative Hopf algebra. 
	The categories $\Br(A)$ and $\Mp(A)$ are equivalent. 
\end{thm}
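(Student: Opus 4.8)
The plan is to promote the object-level constructions of Propositions~\ref{pro:brace=>matched} and~\ref{pro:matched=>brace} to a pair of mutually inverse functors, in the same spirit as the proof of Theorem~\ref{thm:main}. Define $\mathcal{P}\colon\Br(A)\to\Mp(A)$ by sending a Hopf brace $\Abrace$ to the matched pair of Proposition~\ref{pro:brace=>matched}, with actions $h\rightharpoonup k=S(h_1)(h_2\circ k)$ and $h\leftharpoonup k=T(h_1\rightharpoonup k_1)\circ h_2\circ k_2$, and sending a morphism to the same underlying linear map; symmetrically, define $\mathcal{Q}\colon\Mp(A)\to\Br(A)$ using Proposition~\ref{pro:matched=>brace}, equipping $A$ with $ab=a_1\circ(T(a_2)\rightharpoonup b)$ and $S(a)=a_1\rightharpoonup T(a_2)$, and again acting as the identity on morphisms.

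First I would check that $\mathcal{P}$ and $\mathcal{Q}$ land in the stated categories. For $\mathcal{P}$, besides Proposition~\ref{pro:brace=>matched} one must also verify the compatibility $a\circ b=(a_1\rightharpoonup b_1)\circ(a_2\leftharpoonup b_2)$: expanding the right-hand side as $(a_1\rightharpoonup b_1)\circ T(a_2\rightharpoonup b_2)\circ a_3\circ b_3$ and using that $\rightharpoonup$ is a coalgebra map together with the antipode axiom for $T$ collapses it to $a\circ b$. For $\mathcal{Q}$, Proposition~\ref{pro:matched=>brace} already produces a cocommutative Hopf brace whose first structure is $A$. Functoriality is then automatic, since both functors are the identity on underlying maps; it only remains to see that morphisms are preserved, and this holds because each piece of structure on one side is an explicit expression in the structure on the other side: a morphism $f$ in $\Br(A)$ commutes with $\cdot$, $\circ$, $S$, $T$ and hence with $\rightharpoonup$ and $\leftharpoonup$, while a morphism $g$ in $\Mp(A)$ commutes with $\circ$, $T$ and $\rightharpoonup$ and hence with the reconstructed $\cdot$ and $S$.

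The core of the argument is that $\mathcal{Q}\circ\mathcal{P}=\id_{\Br(A)}$ and $\mathcal{P}\circ\mathcal{Q}=\id_{\Mp(A)}$ on objects (on morphisms this is clear). Applying $\mathcal{P}$ then $\mathcal{Q}$ to $\Abrace$, the reconstructed multiplication $a_1\circ(T(a_2)\rightharpoonup b)$ equals $ab$ by~\eqref{eq:ab}, and the reconstructed antipode $a_1\rightharpoonup T(a_2)=S(a_1)(a_2\circ T(a_3))$ equals $S(a)$ by the antipode axiom for $T$; so $\mathcal{Q}\mathcal{P}=\id$. Conversely, given an object of $\Mp(A)$ with actions $\rightharpoonup$, $\leftharpoonup$, applying $\mathcal{Q}$ then $\mathcal{P}$ returns the left action $S(a_1)(a_2\circ b)$, which equals $a\rightharpoonup b$ by the identity $S(a_1)(a_2\circ b)=S(a_1)a_2(a_3\rightharpoonup b)=a\rightharpoonup b$ already established inside the proof of Proposition~\ref{pro:matched=>brace}, and the right action $T(a_1\rightharpoonup b_1)\circ a_2\circ b_2$.

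I expect the one genuinely delicate point to be checking that this last expression recovers the original $\leftharpoonup$. This is precisely where the defining compatibility $a\circ b=(a_1\rightharpoonup b_1)\circ(a_2\leftharpoonup b_2)$ of objects in $\Mp(A)$ must be used: substituting it for $a_2\circ b_2$ turns $T(a_1\rightharpoonup b_1)\circ a_2\circ b_2$ into $T(a_1\rightharpoonup b_1)\circ(a_2\rightharpoonup b_2)\circ(a_3\leftharpoonup b_3)$, and cancelling the first two factors via the antipode axiom for $T$ (using that $\rightharpoonup$ is a coalgebra map, and cocommutativity) leaves exactly $a\leftharpoonup b$. Once both round trips are shown to be the identity, $\mathcal{P}$ and $\mathcal{Q}$ are mutually inverse functors, so $\Br(A)$ and $\Mp(A)$ are in fact isomorphic, hence equivalent.
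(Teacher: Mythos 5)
Your proposal is correct and follows essentially the same route as the paper: the paper also defines the two functors via Propositions~\ref{pro:brace=>matched} and~\ref{pro:matched=>brace}, takes them to be the identity on morphisms, and leaves the verification that the round trips are the identity as ``a direct calculation.'' You have simply carried out that calculation explicitly, including the worthwhile observation that one must separately check the compatibility $a\circ b=(a_1\rightharpoonup b_1)\circ(a_2\leftharpoonup b_2)$ for the image of $\mathcal{P}$ and that the delicate step is recovering $\leftharpoonup$, both of which you handle correctly.
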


\begin{proof}
    Let $F\colon\Br(A)\to\Mp(A)$ be given by
    $F(\Abrace)=(A,A)$, where $(A,A)$ is the matched pair of Proposition~\ref{pro:brace=>matched}, and $F(f)=f$ for any  morphism $f$ of $\Br(A)$. 
    Then clearly $F$ is a functor. Conversely, let $G\colon\Mp(A)\to\Br(A)$ be given by
    $G(A,A)=\Abrace$, where $\Abrace$ is the Hopf brace of Proposition~\ref{pro:matched=>brace},  
    and $G(f)=f$ for any morphism $f$ of $\Mp(A)$. Since such a morphism $f$ 
    satisfies $f(ab)=f(a)f(b)$ for all $a,b\in A$, it follows that $G$ is a functor.
    Now a direct calculation shows that $\Br(A)$ and $\Mp(A)$ are equivalent. 
\end{proof}

\section{Braces and left symmetric algebras}
\label{LSA}

Left symmetric algebras are non-associative algebras that arise in many areas of mathematics. They were first introduced by Cayley in 1896 in his study of rooted tree algebras and 
later rediscovered by Vinberg and Koszul 
to study convex homogeneous cones and affine flat manifolds. 
Left symmetric algebras are also known as Pre-Lie algebras,
Vinberg algebras, Koszul algebras, quasi-associative algebras and 
Gerstenhaber algebras. We refer to~\cite{MR2233854} for a  survey on left symmetric algebras and their applications. 

Recall that a \emph{left symmetric algebra} is a vector space $V$ with a bilinear map $V\times V\to V$, $(x,y)\mapsto xy$, such that $x(yz)-(xy)z=y(xz)-(yx)z$ for all $x,y,z\in V$. If $V$ is a left symmetric algebra, then $V$ with
$[x,y]=xy-yx$ is a Lie algebra. This Lie algebra will be denoted by $\mathfrak{g}(V)$.

Let $\mathfrak{g}$ be a Lie algebra and $\rho\colon\mathfrak{g}\to\mathfrak{gl}(V)$ be a representation. A $1$-cocycle $\pi$ associated with $\rho$ is a linear map 
$\pi\colon\mathfrak{g}\to V$ such that 
\begin{align}\label{eq:lie-1-cocycle-classical}
    \pi([x,y])&=\rho(x)\pi(y)-\rho(y)\pi(x), & x,y &\in\mathfrak{g}.
\end{align}
It was proved in 
\cite{MR868976,MR654637} that
left symmetric algebras are equivalent to bijective $1$-cocycles, 
see also~\cite[Theorem 2.1]{MR2503192} and~\cite[Proposition 9.1]{RumpClassical}. 
The correspondence goes as follows. 
If $\pi$ is a bijective $1$-cocycle on a Lie algebra $\mathfrak{g}$ with respect to a representation $\rho$, then $x*y=\pi^{-1}(\rho(x)\pi(y))$ defines a left symmetric algebra on $\mathfrak{g}$. Conversely, if $V$ is a left symmetric algebra, then $\id\colon \mathfrak{g}(V)\to V$ is a bijective $1$-cocycle, where $V$ is considered as the $\mathfrak{g}(V)$-module with action given by $L\colon V\to V$, $x\mapsto L_x$, $L_x\colon y\mapsto xy$.


In order to study Hopf braces and bijective 1-cocycles for enveloping algebras we recall the general definition of 1-cocycles of Lie algebras.

\begin{defn}
Let $\g$, $\h$ be Lie algebras and $\rho\colon\g\to\operatorname{Der} \h$ be a Lie algebra map. A \emph{bijective $1$-cocycle} $\pi$ associated with $\rho$ is a linear isomorphism $\pi\colon\g\to\h$ such that 
\begin{align}\label{eq:lie-1-cocycle-general}
    \pi([x,y])&=[\pi(x),\pi(y)]+\rho(x)\pi(y)-\rho(y)\pi(x), & x,y &\in\mathfrak{g}.
\end{align}
\end{defn}

Thus \eqref{eq:lie-1-cocycle-classical} corresponds to \eqref{eq:lie-1-cocycle-general} when $\h=V$ is an abelian Lie algebra.

Recall that the subspace $\mathcal{P}(H)$ of primitive elements of a Hopf algebra $H$ is a Lie algebra with the usual commutator $[x,y]=xy-yx$.

\begin{lem}\label{lema:1-cocycle-from-enveloping-to-Lie}
Let $K$ and $A$ be cocommutative Hopf algebras, and $\pi\colon K\to A$ be a bijective	$1$-cocycle. Let $\g=\mathcal{P}(K)$, $\h=\mathcal{P}(A)$. 
Then $\pi$ restricts to a bijective $1$-cocycle $\pi_{|\g}\colon\g\to\h$.
\end{lem}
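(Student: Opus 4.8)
The plan is to show that the restriction of $\pi$ to primitives lands in $\h=\mathcal{P}(A)$, is still injective, is surjective onto $\h$, and satisfies the Lie $1$-cocycle identity \eqref{eq:lie-1-cocycle-general} with respect to the action of $\g$ on $\h$ obtained by restricting $\rightharpoonup$.

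First I would check that $\pi$ carries primitives to primitives. Since $\pi$ is a coalgebra isomorphism, it sends group-likes to group-likes, and in a cocommutative Hopf algebra (over a field of characteristic zero, or at least in the relevant setting) the primitive elements are exactly the $x$ with $\Delta(x)=x\otimes g+g\otimes x$ for $g=1$; as $\pi(1)=1$, a primitive $x\in\g$ has $\Delta(\pi(x))=(\pi\otimes\pi)\Delta(x)=\pi(x)\otimes 1+1\otimes\pi(x)$, so $\pi(x)\in\h$. The same argument applied to $\pi^{-1}$ (also a coalgebra isomorphism with $\pi^{-1}(1)=1$) shows $\pi^{-1}(\h)\subseteq\g$, which gives both that $\pi_{|\g}\colon\g\to\h$ is well-defined and that it is bijective with inverse $\pi^{-1}_{|\h}$. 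Injectivity is inherited from $\pi$.

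Next I would produce the representation $\rho\colon\g\to\operatorname{Der}\h$. The module-algebra action $\rightharpoonup$ of $A$ on itself (from the Hopf brace / cocycle structure) makes $A$ a $K$-module-algebra via $\pi$; restricting, for $x\in\g$ the map $a\mapsto x\rightharpoonup a$ on $A$ should preserve $\h$ and act as a derivation there. Concretely, for $x$ primitive one uses that $x\rightharpoonup(ab)=(x_1\rightharpoonup a)(x_2\rightharpoonup b)=(x\rightharpoonup a)b+a(x\rightharpoonup b)$ since $\Delta(x)=x\otimes 1+1\otimes x$, so $x\rightharpoonup(-)$ is a derivation of $A$; and it preserves $\h$ because a derivation of a Hopf algebra sends primitives to primitives (apply $\Delta$ and use that it is a coalgebra-compatible derivation, or note $\Delta(x\rightharpoonup a)=(x\rightharpoonup a_1)\otimes a_2+a_1\otimes(x\rightharpoonup a_2)$ and specialize). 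That $\rho$ is a Lie algebra map, i.e. $\rho([x,y])=[\rho(x),\rho(y)]$, follows from $A$ being a module over $K$ together with the fact that the commutator of primitives in $K$ equals their bracket, plus cocommutativity; one can also pass through the identity $(a\circ b)\rightharpoonup c=a\rightharpoonup(b\rightharpoonup c)$ from Lemma~\ref{lem:lambda} and the relation $a\circ b - b\circ a = [a,b]$ on primitives.

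Finally, the cocycle identity: for $x,y\in\g$, apply the definition of $\circ$ on $A$ via $\pi$, namely $\pi(xy)=\pi(x_1)(\pi(x_2)\rightharpoonup\pi(y))$, which for primitive $x$ reads $\pi(xy)=\pi(x)\pi(y)+\pi(x)\rightharpoonup\pi(y)$; subtracting the same with $x,y$ swapped and using $\pi([x,y])=\pi(xy)-\pi(yx)$ gives $\pi([x,y])=[\pi(x),\pi(y)]+\pi(x)\rightharpoonup\pi(y)-\pi(y)\rightharpoonup\pi(x)$, which is exactly \eqref{eq:lie-1-cocycle-general} with $\rho(x)=\pi(x)\rightharpoonup(-)$ transported back along $\pi$, i.e. $\rho(x)(z)=\pi^{-1}(\pi(x)\rightharpoonup\pi(z))$ on $\g$ — or, if one prefers to state the cocycle directly between $\g$ and $\h$, with the action of $\g$ on $\h$ given by restriction of $\rightharpoonup$ precomposed with $\pi$. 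The main obstacle, and the step requiring genuine care rather than routine bookkeeping, is verifying that the action $x\rightharpoonup(-)$ genuinely restricts to a derivation of the \emph{Lie} algebra $\h$ and that $\rho$ is a Lie algebra homomorphism into $\operatorname{Der}\h$; everything else is a direct consequence of the cocycle condition specialized to primitive elements together with $\pi(1)=1$ and cocommutativity. I would also remark that one should be slightly careful about the characteristic-zero (or base-field) hypotheses implicit in identifying $\mathcal{P}$ correctly and in the correspondence with left symmetric algebras, but for the statement as given the primitive-preservation argument above is purely formal.
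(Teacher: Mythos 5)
Your argument is correct and follows essentially the same route as the paper: restrict the coalgebra isomorphism to primitives, check that the action preserves $\h$, let $\rho(x)(a)=x\rightharpoonup a$, and specialize the cocycle condition \eqref{eq:cocycle-condition} to primitive elements to obtain \eqref{eq:lie-1-cocycle-general}. One justification you offer, however, is false as stated: an algebra derivation of a Hopf algebra does \emph{not} in general send primitives to primitives, since nothing forces a derivation of the multiplication to interact with $\Delta$. What actually carries the argument is your alternative identity $\Delta(x\rightharpoonup a)=(x\rightharpoonup a_1)\otimes a_2+a_1\otimes(x\rightharpoonup a_2)$, which is precisely the module-\emph{coalgebra} property of $\rightharpoonup$ specialized to a primitive $x$ (together with $x\rightharpoonup 1=\epsilon(x)1=0$). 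That property is not part of the definition of a bijective $1$-cocycle and must be supplied; in the paper it is Lemma~\ref{lem:leftright}(2), which uses cocommutativity, so you should cite it (or reprove it) rather than appeal to a general fact about derivations. A second, more cosmetic point: writing $\pi(x_2)\rightharpoonup\pi(y)$ conflates the $K$-action on $A$ appearing in the definition of the $1$-cocycle with the induced action of $A$ on itself; the two agree after transporting along $\pi$ as in Theorem~\ref{thm:main}, so your computation is equivalent to the paper's, but the cleaner formulation is $\rho(x)(a)=x\rightharpoonup a$ for $x\in\g$, $a\in\h$, which makes the final antisymmetrization step read off \eqref{eq:lie-1-cocycle-general} directly. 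Your worry about characteristic-zero hypotheses is not needed for this lemma (it only enters in the converse direction via Cartier--Kostant).
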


\begin{proof}
First, $\pi_{|\g}\colon\g\to\h$ is a linear isomorphism since $\pi$ is a coalgebra isomorphism.
By Lemma \ref{lem:leftright}, $A$ is a left $K$-module-coalgebra, so for each $x\in\g$, $a\in\h$,
\begin{align*}
    \Delta(x\rightharpoonup a) &= x_1\rightharpoonup a_1 \otimes x_2\rightharpoonup a_2 
    = x\rightharpoonup a \otimes 1 + 1\otimes x\rightharpoonup a.
\end{align*}
That is, $x\rightharpoonup a\in\h$ for all $x\in\g$, $a\in\h$. Let $\rho\colon\g\to\operatorname{End} \h$, $\rho(x)(a)=x\rightharpoonup a$. As $\rightharpoonup$ is an action, $\rho$ is a Lie algebra map, 
$\rho(\g)\subseteq \operatorname{Der} \h$. From \eqref{eq:cocycle-condition},
\begin{align*}
 \pi(xy)&=\pi(x)\pi(y)+\rho(x)\pi(y) & \text{for all }&x,y\in \g.
\end{align*}
Thus $\pi$ satisfies \eqref{eq:lie-1-cocycle-general}.
\end{proof}
	
Reciprocally, we can extend a bijective 1-cocycle from Lie algebras to their enveloping algebras.
To prove this we need an auxiliar result.

\begin{lem}\label{lem:extension-lie-algebra-action}
Let $\rho\colon\g\to\operatorname{Der} \h$ be a Lie algebra map. Then $\rho$ extends to an action $\rightharpoonup$ of $\cU(\g)$ on $\cU(\h)$ such that $\cU(\h)$ is a $\cU(\g)$-module-algebra.
Furthermore, under this action $\cU(\h)$ is also a $\cU(\g)$-module-coalgebra.
\end{lem}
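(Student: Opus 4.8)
The plan is to build the action of $\cU(\g)$ on $\cU(\h)$ in two stages: first extend each derivation $\rho(x)\in\operatorname{Der}\h$ to a derivation of the associative algebra $\cU(\h)$, and then use the universal property of $\cU(\g)$ to assemble these into an algebra action. For the first stage, recall that for any Lie algebra $\h$, a derivation $d\colon\h\to\h$ extends uniquely to a derivation $\tilde d\colon\cU(\h)\to\cU(\h)$: one checks that $d$ together with the identity gives a Lie algebra map $\h\to\h\rtimes\h$ (or more concretely, one extends $d$ to the tensor algebra $T(\h)$ by the Leibniz rule and observes that the defining ideal of $\cU(\h)$ is stable, since $d([x,y]-xy+yx)=[dx,y]+[x,dy]-(dx)y-x(dy)+(dy)x+y(dx)$ lies in the ideal). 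Moreover, the assignment $d\mapsto\tilde d$ is a Lie algebra homomorphism $\operatorname{Der}\h\to\operatorname{Der}\cU(\h)$, because on the generating set $\h$ we have $\widetilde{[d,d']}=[\tilde d,\tilde d']$ by direct computation, and two derivations of $\cU(\h)$ agreeing on generators coincide.

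Next I would compose with $\rho$ to get a Lie algebra map $\g\to\operatorname{Der}\cU(\h)\subseteq\operatorname{End}\cU(\h)=\operatorname{End}_k\cU(\h)$, landing in the associative algebra $\operatorname{End}_k\cU(\h)$ with commutator bracket. By the universal property of $\cU(\g)$ this extends to a unital algebra homomorphism $\cU(\g)\to\operatorname{End}_k\cU(\h)$, i.e.\ an action $\rightharpoonup$ of $\cU(\g)$ on the vector space $\cU(\h)$ extending $\rho$. It remains to verify the two module-algebra compatibilities. For the module-algebra axiom $u\rightharpoonup(ab)=(u_1\rightharpoonup a)(u_2\rightharpoonup b)$ and $u\rightharpoonup 1=\epsilon(u)1$: both sides are $k$-linear in $u$, and the set of $u\in\cU(\g)$ for which the identity holds (for all $a,b$) is a subalgebra of $\cU(\g)$ containing $1$ and all of $\g$ (for $x\in\g$ primitive, the identity is exactly the derivation property of $\tilde{\rho(x)}$ and $\tilde{\rho(x)}(1)=0$); since $\g$ generates $\cU(\g)$ as an algebra, the identity holds for all $u$. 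Here one uses that $\cU(\g)$ is cocommutative and that the coproduct is an algebra map, so that if the identity holds for $u$ and $v$ it holds for $uv$ — a standard convolution-type argument.

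For the module-coalgebra axiom $\Delta(u\rightharpoonup a)=(u_1\rightharpoonup a_1)\otimes(u_2\rightharpoonup a_2)$ and $\epsilon(u\rightharpoonup a)=\epsilon(u)\epsilon(a)$, I would run the same ``closed under the generators and under products'' argument, but now in two variables simultaneously: the set of pairs $(u,a)$ satisfying the coalgebra-map conditions is closed under multiplication in each slot and contains $\g\times\h$ together with $1\otimes\cU(\h)$ and $\cU(\g)\otimes 1$. Concretely, for $x\in\g$, $a\in\h$ one has $x\rightharpoonup a\in\h$ (it equals $\rho(x)(a)$, which lies in $\h$), hence $\Delta(x\rightharpoonup a)=(x\rightharpoonup a)\otimes 1+1\otimes(x\rightharpoonup a)$, matching $(x_1\rightharpoonup a_1)\otimes(x_2\rightharpoonup a_2)=(x\rightharpoonup a)\otimes 1 + 1\otimes(x\rightharpoonup a)$ using $\Delta x=x\otimes1+1\otimes x$, $\Delta a=a\otimes1+1\otimes a$, and $1\rightharpoonup 1=1$, $x\rightharpoonup 1=0$. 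Then one propagates to products.

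The main obstacle is bookkeeping rather than conceptual: carefully setting up the ``multiplicativity in each argument'' inductions so that one genuinely reduces to checking the identities on the Lie-algebra generators, and making sure the two compatibilities are proved for all of $\cU(\g)$ acting on all of $\cU(\h)$ and not just on generators. Everything else — existence of the extended derivations, functoriality $d\mapsto\tilde d$, and the universal property of $\cU(\g)$ — is standard. I would also remark that the module-coalgebra statement can alternatively be deduced from the general fact that an action of a cocommutative Hopf algebra $\cU(\g)$ on an algebra $\cU(\h)$ by derivations, extended from primitives, automatically respects the coalgebra structure; but the direct verification above is self-contained.
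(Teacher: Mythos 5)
Your proposal is correct and follows essentially the same route as the paper: extend each $\rho(x)$ to a derivation of $\cU(\h)$, use the universal property of $\cU(\g)$ to obtain the algebra action, and then verify the module-algebra and module-coalgebra identities by reducing to the Lie-algebra generators (the paper phrases this reduction as an induction on the canonical filtration of $\cU(\h)$, which is the same ``closure under products'' argument you describe). The only cosmetic difference is that you spell out the standard extension $d\mapsto\tilde d$ of derivations, which the paper takes for granted.
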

\begin{proof}
Indeed $\rho$ extends to a Lie algebra map $\rho\colon\g\to \operatorname{End}\cU(\h)$ such that
\begin{align*}
    \rho(x)(yz)&=\rho(x)(y)z+y\rho(x)(z) & \mbox{for all }&x\in\g, \, y,z \in\cU(\h).
\end{align*}
The corresponding algebra map $\cU(\g)\to \operatorname{End}\cU(\h)$ gives the action $\rightharpoonup$.
We recall that $\cU(\h)$ admits a canonical filtration $(\cU(\h)_n)_{n\in\N_0}$, where $\cU(\h)_n$ is spanned by the product of at most $n$ elements of $\h$. This action preserves the filtration.

To prove the last statement we claim first that
\begin{align}\label{eq:Uh-modulo-coalgebra-claim}
\Delta(x\rightharpoonup y) &= x\rightharpoonup y_1\otimes y_2 + y_1\otimes x\rightharpoonup y_2, & \mbox{for all }&x\in\g, \, y\in\cU(\h).
\end{align}
 We prove by induction on $n$ that \eqref{eq:Uh-modulo-coalgebra-claim} holds for all $ y\in\cU(\h)_n$. The cases $n=0,1$ follows directly since $\cU(\h)_0=k 1$ and $\cU(\h)_1=k 1\oplus \h$. Now assume that \eqref{eq:Uh-modulo-coalgebra-claim} holds for all $y\in\cU(\h)_n$. For each $h\in\h$,
\begin{align*}
\Delta(x\rightharpoonup hy) &= \Delta \big( (x\rightharpoonup h) y+ h(x\rightharpoonup y) \big) 
= x\rightharpoonup (hy)_1\otimes (hy)_2 + (hy)_1\otimes x\rightharpoonup (hy)_2,
\end{align*}
and the inductive step follows since $\cU(\h)_{n+1}-\cU(\h)_n$ is spanned by the elements $hy$, $y\in\cU(\h)_n$, $h\in\h$.

From \eqref{eq:Uh-modulo-coalgebra-claim} we prove that $\Delta(x\rightharpoonup y) = x\rightharpoonup \Delta(y)$ for all $x\in\cU(\g)$, $y\in\cU(\h)$ since $\cU(\g)$ is generated as an algebra by $\g$.
\end{proof}

\begin{lem}\label{lema:1-cocycle-from-Lie-to-enveloping}
Let $\g$, $\h$ be Lie algebras and $\pi\colon\g\to\h$ a bijective $1$-cocycle. Then $\pi$ admits an extension $\pi\colon\cU(\g)\to\cU(\h)$ such that it is a bijective $1$-cocycle.
\end{lem}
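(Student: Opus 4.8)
The plan is to use the universal property of the enveloping algebra twice. Given the bijective $1$-cocycle $\pi\colon\g\to\h$ associated with a Lie algebra map $\rho\colon\g\to\operatorname{Der}\h$, I would first invoke Lemma~\ref{lem:extension-lie-algebra-action} to extend $\rho$ to an action $\rightharpoonup$ of $\cU(\g)$ on $\cU(\h)$ making $\cU(\h)$ a $\cU(\g)$-module-algebra and module-coalgebra. The goal is then to produce a coalgebra isomorphism $\pi\colon\cU(\g)\to\cU(\h)$ extending the given one and satisfying the cocycle condition~\eqref{eq:cocycle-condition}.

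First I would construct the extension as an algebra map out of a \emph{different} algebra. Consider on the coalgebra $\cU(\h)$ the new product $a\circ b=a_1(a_2\rightharpoonup b)$; one checks, exactly as in the computations of Section~\ref{matched} (or directly from the module-algebra axioms together with \eqref{eq:Uh-modulo-coalgebra-claim}), that $\circ$ is associative with unit $1$ and that $(\cU(\h),\circ,1,\Delta,\epsilon)$ is a bialgebra, hence — being a connected graded-like coalgebra, or by cocommutativity and the standard criterion already cited, \cite[Theorem 3(4)]{MR585730} — a Hopf algebra; call it $\cU(\h)_\circ$. Now the map $\g\to\cU(\h)_\circ$, $x\mapsto\pi(x)$, is a Lie algebra homomorphism: indeed for $x,y\in\g$ we have $\pi(x)\circ\pi(y)-\pi(y)\circ\pi(x)=\pi(x)\pi(y)+\rho(x)\pi(y)-\pi(y)\pi(x)-\rho(y)\pi(x)=[\pi(x),\pi(y)]+\rho(x)\pi(y)-\rho(y)\pi(x)=\pi([x,y])$ by~\eqref{eq:lie-1-cocycle-general}. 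By the universal property of $\cU(\g)$ this extends uniquely to a Hopf algebra homomorphism $\pi\colon\cU(\g)\to\cU(\h)_\circ$; since $\cU(\g)$ and $\cU(\h)_\circ$ share the same coalgebra structure as $\cU(\g)$ and $\cU(\h)$ respectively, $\pi$ is in particular a coalgebra map. The identity $\pi(xy)=\pi(x)\circ\pi(y)=\pi(x)_1(\pi(x)_2\rightharpoonup\pi(y))$ for $x,y\in\g$ then propagates to all of $\cU(\g)$ because both sides are algebra maps in the first variable (with the $\circ$-multiplication on the target) and $\rightharpoonup$ is an action of $\cU(\g)$; this is precisely~\eqref{eq:cocycle-condition}.

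It remains to see that $\pi$ is bijective. Filter $\cU(\g)$ and $\cU(\h)$ by the canonical filtrations. The action $\rightharpoonup$ raises filtration by at most the filtration degree of the acting element minus one (as $\rho(\g)$ acts by derivations, hence does not raise degree on $\cU(\h)_1=k1\oplus\h$), so on the associated graded the $\circ$-product coincides with the original product and $\operatorname{gr}\pi\colon\operatorname{gr}\cU(\g)\to\operatorname{gr}\cU(\h)$ is the map induced on symmetric algebras by the linear isomorphism $\pi_{|\g}\colon\g\to\h$, hence an isomorphism in each degree. Therefore $\pi$ is a filtered map inducing an isomorphism on associated graded, so it is bijective.

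The main obstacle is the bookkeeping in the last paragraph: one must check carefully that the twisted product $\circ$ and the original product on $\cU(\h)$ agree on the associated graded, which amounts to verifying that $a\rightharpoonup b\in\cU(\h)_{m+n-1}$ for $a\in\cU(\g)_m$, $b\in\cU(\h)_n$ with $m\ge 1$ — a routine induction using that $\rho(\g)$ consists of derivations preserving $\h\subseteq\cU(\h)_1$. Everything else is a formal consequence of the universal property and Lemma~\ref{lem:extension-lie-algebra-action}.
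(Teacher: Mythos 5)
Your plan founders at its very first step: the product $a\circ b=a_1(a_2\rightharpoonup b)$ on $\cU(\h)$ is not well defined from the given data. The action $\rightharpoonup$ supplied by Lemma~\ref{lem:extension-lie-algebra-action} is an action of $\cU(\g)$ on $\cU(\h)$, so for $a,b\in\cU(\h)$ the expression $a_2\rightharpoonup b$ has no meaning. On the degree-one part you can transport the action through the linear bijection $\pi_{|\g}$, i.e.\ set $h\rightharpoonup b:=\rho(\pi^{-1}(h))(b)$ for $h\in\h$ (this is what your computation of $\pi(x)\circ\pi(y)$ silently uses), but this map $\h\to\operatorname{End}\cU(\h)$ is \emph{not} a Lie algebra homomorphism for the bracket of $\h$ — since $\pi$ is a $1$-cocycle rather than a Lie map, $\rho(\pi^{-1}[h,h'])\neq[\rho(\pi^{-1}h),\rho(\pi^{-1}h')]$ in general — so it does not extend to the associative algebra $\cU(\h)$. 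It does extend to an action of $(\cU(\h),\circ)$, but that is exactly the structure you are trying to build: in the Hopf-brace formalism the module-algebra axiom for $\rightharpoonup$ is taken with respect to $\circ$, so defining $\circ$ by $a_1(a_2\rightharpoonup b)$ is circular. The appeal to ``the computations of Section~\ref{matched}'' does not close this gap: those computations manipulate a Hopf brace that already exists; they do not construct one. (There is a genuine theorem in this direction — the Grossman–Larson/post-Lie construction of a second product on $\cU(\h)$ — but it requires a separate recursive extension of $\triangleright$ to $\cU(\h)\otimes\cU(\h)$ and a nontrivial associativity proof, none of which you supply.)

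The remaining ingredients of your argument are sound and would work once a well-defined extension exists: the universal-property reasoning, the propagation of \eqref{eq:cocycle-condition}, and the bijectivity via the associated graded (which is a clean repackaging of the paper's ``upper triangular matrix on PBW bases'' argument). The paper sidesteps the circularity by defining $\pi$ directly, by recursion on a PBW basis via \eqref{eq:extension-1-cocycle}, and then verifying the cocycle and coalgebra properties by induction on the filtration. If you want to keep a universal-property proof, the standard repair is to work in the smash product $\cU(\h)\#\cU(\g)$: the cocycle identity \eqref{eq:lie-1-cocycle-general} says precisely that $x\mapsto\pi(x)\#1+1\#x$ is a Lie algebra map $\g\to\cU(\h)\#\cU(\g)$, hence extends to a Hopf algebra map $\Phi\colon\cU(\g)\to\cU(\h)\#\cU(\g)$ with $(\epsilon\otimes\id)\Phi=\id$, and $\pi:=(\id\otimes\epsilon)\Phi$ is the desired $1$-cocycle; bijectivity then follows by your graded argument.
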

\begin{proof}
We fix a basis $(v_i)_{i\in I}$ of $\g$. Let $w_i=\pi(v_i)$, so $(w_i)_{i\in I}$ is a basis of $\h$. Let $\pi\colon\cU(\g)\to\cU(\h)$ be the linear map defined recursively on the PBW basis of $\g$ as follows: $\pi(1)=1$, $\pi(v_i)=w_i$ for all $i\in I$, and for $n\ge 2$, $i_1,\dots,i_n\in I$,
\begin{align}\label{eq:extension-1-cocycle}
\pi(v_{i_1}\dots v_{i_n}) &= w_{i_1}\pi(v_{i_2}\dots v_{i_n}) +
v_{i_1} \rightharpoonup \pi(v_{i_2}\dots v_{i_n}).
\end{align}
Recursively we prove that $\pi(v_{i_1}\dots v_{i_n}) \in  w_{i_1}\dots w_{i_n}+\cU(\h)_{n-1}$, so the matrix of $\pi$ with respect to the PBW bases of $\g$ and $\h$ with generators $(v_i)_{i\in I}$ and $(w_i)_{i\in I}$, respectively, is upper triangular; thus $\pi$ is a linear isomorphism. From \eqref{eq:extension-1-cocycle},
\begin{align}\label{eq:extension-1-cocycle-x-in-g}
\pi(xy) &= \pi(x) \pi(y) + x \rightharpoonup \pi(y) & &\mbox{for all } x\in\g, \, y\in\cU(\g).
\end{align}
We check that \eqref{eq:cocycle-condition} holds for all $x\in\cU(\g)_n$,  $y\in\cU(\g)$ by induction on $n$: the case $n=0$ is direct and the case $n=1$ is \eqref{eq:extension-1-cocycle-x-in-g}. For the inductive step, it is enough to consider products $vx$, $v\in\g$, $x\in\cU(\g)_n$, since these elements span $\cU(\g)_{n+1}$. Using \eqref{eq:extension-1-cocycle-x-in-g} and inductive hypothesis,
\begin{align*}
\pi\big( (vx) y \big) &= \pi(v) \pi(xy) + v \rightharpoonup \pi(xy) \\
&= \pi(v) \pi(x_1) (x_2 \rightharpoonup \pi( y)) + v \rightharpoonup \big( \pi(x_1) (x_2 \rightharpoonup \pi( y)) \big) \\
&= \big( \pi(v) \pi(x_1) + (v \rightharpoonup  \pi(x_1) \big) (x_2 \rightharpoonup \pi( y))+ \pi(x_1) \big( (v x_2) \rightharpoonup \pi( y) \big) \\
&= \pi(vx_1) (x_2 \rightharpoonup \pi( y))+ \pi(x_1) \big( (v x_2) \rightharpoonup \pi( y) \big) = \pi((vx)_1) \big( (vx)_2 \rightharpoonup \pi( y) \big).
\end{align*}
Thus $\pi$ satisfies \eqref{eq:cocycle-condition}. Finally we prove that $\pi$ is a coalgebra map. We claim that $\pi_{|\cU(\g)_n}$ is so, by induction on $n$: the cases $n=0,1$ are direct. Let $v\in\g$, $x\in\cU(\g)_n$:
\begin{align*}
    \Delta \pi (vx) &= \Delta \big( \pi(v) \pi(x) + v \rightharpoonup \pi(x) \big) = (\pi(v)\otimes 1 + 1\otimes \pi(v)) \Delta  \pi(x) + \Delta(v \rightharpoonup \pi(x) ) \\
    &= \pi(vx_1)\otimes \pi(x_2) + \pi(x_1) \otimes \pi(vx_2) = (\pi\otimes\pi) \Delta(vx),
\end{align*}
by \eqref{eq:extension-1-cocycle-x-in-g}, inductive hypothesis and Lemma \ref{lem:extension-lie-algebra-action}. Thus the inductive step follows and $\pi$ is a coalgebra map.
\end{proof}

\begin{pro}
Assume that $k$ is an algebraically closed field of characteristic 0. For each pair of Lie algebras $\g$, $\h$, there exists a bijective correspondence between 
\begin{enumerate}
    \item bijective $1$-cocycles between the Lie algebras $\g$ and $\h$, and
    \item bijective $1$-cocycles between the Hopf algebras $\cU(\g)$ and $\cU(\h)$.
\end{enumerate}
\end{pro}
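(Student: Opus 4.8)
The plan is to make the two passages---restriction to primitives and extension to enveloping algebras---into mutually inverse maps. Since $k$ has characteristic $0$, the primitive elements of an enveloping algebra are exactly the Lie algebra, so $\mathcal{P}(\cU(\g))=\g$ and $\mathcal{P}(\cU(\h))=\h$; this is the only point where characteristic $0$ is used. Accordingly, define $\Phi$ to send a bijective $1$-cocycle $\pi_0\colon\g\to\h$ associated with $\rho\colon\g\to\operatorname{Der}\h$ to the extension $\pi\colon\cU(\g)\to\cU(\h)$ of Lemma~\ref{lema:1-cocycle-from-Lie-to-enveloping}, where $\cU(\h)$ carries the $\cU(\g)$-module-algebra structure of Lemma~\ref{lem:extension-lie-algebra-action}; and define $\Psi$ to send a bijective $1$-cocycle $\pi\colon\cU(\g)\to\cU(\h)$ to the bijective $1$-cocycle $\pi_{|\g}\colon\g\to\h$ of Lemma~\ref{lema:1-cocycle-from-enveloping-to-Lie}, associated with the restriction to $\g\times\h$ of the $\cU(\g)$-action on $\cU(\h)$ (which lands in $\h$ and in $\operatorname{Der}\h$ by that lemma, via Lemma~\ref{lem:leftright}). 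Note that both $\Phi$ and $\Psi$ respect the action data, so the claimed correspondence is between pairs (cocycle map, action).

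The equality $\Psi\Phi=\id$ is immediate from the construction: in the recursion \eqref{eq:extension-1-cocycle} one has $\pi(v_i)=w_i=\pi_0(v_i)$ on a basis, so $\pi$ restricts to $\pi_0$ on $\g=\mathcal{P}(\cU(\g))$, and the $\cU(\g)$-action restricted to $\g\times\h$ is again $\rho$ by construction of Lemma~\ref{lem:extension-lie-algebra-action}.

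For $\Phi\Psi=\id$, fix a bijective $1$-cocycle $\pi\colon\cU(\g)\to\cU(\h)$ with its module-algebra action $\rightharpoonup$, put $\rho=\rightharpoonup_{|\g\times\h}$ and $\pi_0=\pi_{|\g}$. The key point is that $\rightharpoonup$ agrees with the $\cU(\g)$-module-algebra structure reconstructed from $\rho$ by Lemma~\ref{lem:extension-lie-algebra-action}: any module-algebra action of $\cU(\g)$ on $\cU(\h)$ is determined by its restriction to $\g\times\h$, since for $x\in\g$ the Leibniz rule forces $x\rightharpoonup(h_1\cdots h_m)=\sum_j h_1\cdots(x\rightharpoonup h_j)\cdots h_m$, and then the action of all of $\cU(\g)$ is forced because $\cU(\g)$ is generated as an algebra by $\g$. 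With the actions identified, \eqref{eq:cocycle-condition} applied to $v_{i_1}\in\g$ primitive, together with $\pi(1)=1$, gives $\pi(v_{i_1}v_{i_2}\cdots v_{i_n})=\pi(v_{i_1})\pi(v_{i_2}\cdots v_{i_n})+v_{i_1}\rightharpoonup\pi(v_{i_2}\cdots v_{i_n})$ on a PBW basis; this is precisely the recursion \eqref{eq:extension-1-cocycle} defining $\Phi(\pi_0)$, and since $\pi$ and $\Phi(\pi_0)$ agree on $1$ and on $\g$, induction on the length of PBW monomials yields $\pi=\Phi(\pi_0)$.

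I expect the main obstacle to be exactly that identification of actions---verifying that the module-algebra structure packaged into a Hopf $1$-cocycle $\cU(\g)\to\cU(\h)$ is nothing but the canonical extension of its restriction to primitives. Once this uniqueness is in hand, the rest is routine bookkeeping with the PBW basis and the recursions already established in Lemmas~\ref{lem:extension-lie-algebra-action} and~\ref{lema:1-cocycle-from-Lie-to-enveloping}. One may either check basis-independence of those constructions or, more simply, fix compatible bases $(v_i)$ of $\g$ and $(w_i)=(\pi_0(v_i))$ of $\h$ throughout, which is enough to produce the claimed bijection of sets.
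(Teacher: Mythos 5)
Your proof is correct and follows the same route as the paper: use characteristic $0$ (Cartier--Kostant) to identify $\mathcal{P}(\cU(\g))=\g$ and $\mathcal{P}(\cU(\h))=\h$, then pair the restriction of Lemma~\ref{lema:1-cocycle-from-enveloping-to-Lie} with the extension of Lemma~\ref{lema:1-cocycle-from-Lie-to-enveloping}. The paper's proof is just this two-line citation and leaves the mutual-inverse check implicit; your verification that a module-algebra action of $\cU(\g)$ on $\cU(\h)$ is forced by its restriction to $\g\times\h$ (Leibniz on generators of $\cU(\h)$, then multiplicativity on $\cU(\g)$), and that the cocycle recursion \eqref{eq:extension-1-cocycle} pins down $\pi$ from $\pi_{|\g}$, is exactly the content needed to justify that citation.
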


\begin{proof}
By Cartier--Kostant Theorem, $\mathcal{P}(\cU(\g))=\g$ and $\mathcal{P}(\cU(\h))=\h$. Then we apply Lemmas \ref{lema:1-cocycle-from-enveloping-to-Lie} and \ref{lema:1-cocycle-from-Lie-to-enveloping}.
\end{proof}

For a left symmetric algebra $V$ we write $\g(V)$ to denote 
its Lie algebra and $\cU(V)$ to denote the enveloping algebra of $\g(V)$.

The following proposition formalizes the fact that left symmetric algebras
are Lie theoretical analogs of classical braces. 
This phenomenon was already observed in~\cite{MR3465351} and~\cite{RumpClassical}. 

\begin{pro}
\label{pro:LSA}
Let $V$ be a left symmetric algebra. Then $\cU(V)$ is a Hopf brace.
\end{pro}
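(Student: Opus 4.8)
The plan is to build the Hopf brace structure on $\cU(V)$ by exhibiting a bijective $1$-cocycle on $\cU(\g(V))$ and then invoking Theorem~\ref{thm:main}. Concretely, the left symmetric algebra $V$ gives, as recalled just before Lemma~\ref{lema:1-cocycle-from-enveloping-to-Lie}, a bijective $1$-cocycle $\id\colon \g(V)\to V$ with respect to the representation $L\colon\g(V)\to\mathfrak{gl}(V)$, $L_x(y)=xy$, where here $V$ is regarded as an \emph{abelian} Lie algebra; so this is a bijective $1$-cocycle in the sense of \eqref{eq:lie-1-cocycle-general} with $\h=V$ abelian. By Lemma~\ref{lema:1-cocycle-from-Lie-to-enveloping} this extends to a bijective $1$-cocycle $\pi\colon\cU(\g(V))\to\cU(V)$ between Hopf algebras, where $\cU(V)$ denotes the enveloping algebra of the abelian Lie algebra underlying $V$ (note: this is the \emph{symmetric algebra} $\Sym(V)$ as an algebra, but what matters is only its Hopf algebra structure, which is that of a polynomial Hopf algebra). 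Applying the functor $G\colon\Bc(\cU(V))\to\Br(\cU(V))$ of Theorem~\ref{thm:main} to $\pi$ produces a Hopf brace whose underlying first Hopf algebra is $\cU(V)$, which is exactly the assertion.

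First I would make precise which Hopf algebra $\cU(V)$ refers to: following the sentence preceding the proposition, $\cU(V)=\cU(\g(V))$, the enveloping algebra of the Lie algebra $\g(V)$ attached to $V$. So the claim is that $\cU(\g(V))$ underlies a Hopf brace, i.e. there is a second Hopf algebra structure $\circ$ on it satisfying \eqref{eq:brace}. For this I would \emph{not} use the abelian Lie algebra $V$; rather, the correct reading is that $\id\colon\g(V)\to V$ is the bijective $1$-cocycle, its target $\h = V$ being abelian, and the \emph{source} is $\g(V)$. Then $\cU$ of the source is $\cU(\g(V))$ and $\cU$ of the target is $\Sym(V)$; Lemma~\ref{lema:1-cocycle-from-Lie-to-enveloping} upgrades $\id$ to a bijective coalgebra isomorphism $\pi\colon\cU(\g(V))\to\Sym(V)$ satisfying the cocycle condition \eqref{eq:cocycle-condition} for the $\cU(\g(V))$-action on $\Sym(V)$ coming (via Lemma~\ref{lem:extension-lie-algebra-action}) from the $\g(V)$-action $L$ on $V$ by derivations of the abelian Lie algebra $V$. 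Now transport via $\pi$: by the construction of $G$ in Theorem~\ref{thm:main}, $a\circ b=\pi(\pi^{-1}(a)\pi^{-1}(b))$ defines a Hopf algebra structure on $\cU(\g(V))$ with antipode $\pi S\pi^{-1}$, and $(\cU(\g(V)),\cdot,\circ)$ is a Hopf brace. Here the first structure $\cdot$ is the original enveloping-algebra product — precisely what we want.

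The main thing to check — and the one genuine verification — is that $L$ indeed lands in $\operatorname{Der}$ of the abelian Lie algebra $V$ and that $\id\colon\g(V)\to V$ satisfies \eqref{eq:lie-1-cocycle-general}. For the first point, every linear endomorphism of an abelian Lie algebra is a derivation, so $L\colon\g(V)\to\operatorname{Der}(V)$ trivially; that $L$ is a Lie algebra map is the content of the defining identity of a left symmetric algebra, namely $L_{[x,y]}=L_xL_y-L_yL_x$, which is exactly the left-symmetry axiom rewritten. For the second, \eqref{eq:lie-1-cocycle-general} with $\h$ abelian reduces to $[x,y]=L_x(y)-L_y(x)=xy-yx$, which holds by the definition of $\g(V)$. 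So both hypotheses of Lemma~\ref{lema:1-cocycle-from-Lie-to-enveloping} are met. I would then simply cite Lemmas~\ref{lem:extension-lie-algebra-action} and~\ref{lema:1-cocycle-from-Lie-to-enveloping} for the extension and Theorem~\ref{thm:main} (its functor $G$) for passing from the bijective $1$-cocycle to the Hopf brace.

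I expect no serious obstacle here; the proof is a short assembly of already-established results. The only subtlety worth a sentence in the write-up is the potential notational clash around the symbol $\cU(V)$ — clarifying that the abelian Lie algebra $V$ enters only as the \emph{codomain} of the cocycle, while $\cU(V)$ in the statement means $\cU(\g(V))$, the domain side — and the observation that the transported product $\circ$ encodes precisely the left symmetric multiplication of $V$ at the level of primitives, via \eqref{eq:extension-1-cocycle-x-in-g}, so the construction is a genuine Hopf-theoretic lift of the classical correspondence between left symmetric algebras and bijective $1$-cocycles recalled above.
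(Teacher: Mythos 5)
Your proof follows the paper's own argument exactly: the bijective $1$-cocycle $\id\colon\g(V)\to V$ (with $V$ viewed as an abelian Lie algebra), its extension to a bijective $1$-cocycle $\pi\colon\cU(\g(V))\to S(V)$ via Lemma~\ref{lema:1-cocycle-from-Lie-to-enveloping}, and an appeal to Theorem~\ref{thm:main}; your explicit check that $L$ is a Lie algebra map into $\operatorname{Der}(V)$ and that $\id$ satisfies \eqref{eq:lie-1-cocycle-general} is a welcome addition that the paper leaves implicit. One detail is backwards, however: $\pi$ is an object of $\Bc(S(V))$, not of $\Bc(\cU(V))$, so the functor $G$ of Theorem~\ref{thm:main} produces a Hopf brace in $\Br(S(V))$ whose \emph{first} Hopf algebra structure is the commutative one of $S(V)$, while the enveloping-algebra multiplication is transported to the \emph{second} structure $\circ$. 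This is consistent with the classical picture (the abelian group is the first structure of a brace) and still substantiates the -- admittedly loosely stated -- claim that $\cU(V)$ underlies a Hopf brace, but your assertion that the first structure $\cdot$ is ``the original enveloping-algebra product'' does not match the construction of $G$, under which the first structure of $G(\pi\colon H\to A)$ is always that of the target $A$.
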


\begin{proof}
There exists a bijective $1$-cocycle $\g\to V$, 
where $V$ is considered as a $\g(V)$-module with action given by left multiplication. By Lemma~\ref{lema:1-cocycle-from-Lie-to-enveloping}, this $1$-cocycle admits
an extension to a bijective $1$-cocycle $\cU(V)\to S(V)$, where 
$S(V)$ is the symmetric algebra of $V$. Then the claim follows from Theorem~\ref{thm:main}.
\end{proof}

\begin{example}
Let $V$ be a 2-dimensional vector space with basis $x,y$. Fix $\alpha\in k$. Then the bilinear map $V\times V\to V$ such that
\begin{align*}
    x\cdot x &= 0, & x\cdot y &= 0, & y \cdot x &= x, & y\cdot y &= \alpha y,
\end{align*}
makes it a left symmetric algebra. The associated Lie algebra $\g$ is the solvable 2-dimensional Lie algebra such that $[x,y]=x$. Thus it has an associated bijective $1$-cocycle $\g\to V$ which extends to a bijective $1$-cocycle $\pi:\cU(\g)\to S(V)$ given by
\begin{align*}
    \pi(x^m y^n) &= \sum_{j=1}^n \tau_n(j) \alpha^{n-j} \, x^m y^{j}, & & m,n\in\N_0.
\end{align*}
Here, $\tau_n(j)$, $1\le j\le n$, is defined recursively as follows:
\begin{align}\label{eq:defn-tau}
\tau_n(1)&=\tau_n(n)=1, & \tau_{n+1}(j) &= \tau_n(j-1) +j \tau_n(j), \quad 2\le j\le n.
\end{align}
The proof is direct using \eqref{eq:extension-1-cocycle}. Notice that $\tau_n(1)=1$, $\tau_n(2)=2^{n-1}-1$ for all $n$.
\end{example}

\begin{exa}
This example is based on~\cite{MR1296585}. 
Let $k$ be a field of characteristic 3, $\alpha\in k^{\times}$. Let $V$ be a 3-dimensional vector space with basis $x,y,z$. Then the bilinear map $V\times V\to V$ such that
\begin{align*}
    x\cdot x &= 0, & y\cdot x &= (1-\alpha^{-1})z, & z\cdot x &= (\alpha+1)x, \\
    x\cdot y &= -(1-\alpha^{-1})z, & y\cdot y &= 0, & z\cdot y &= (\alpha-1)y, \\
    x\cdot z &= \alpha x, & y\cdot z &= \alpha z, & z\cdot z &= \alpha z,
\end{align*}
makes it a left symmetric algebra. The associated Lie algebra is $\sld$: we fix the classical basis $e,f,h$, where $[e,f]=h$, $[h,e]=2e$, $[h,f]=-2f$. Thus it has an associated bijective $1$-cocycle $\sld\to V$ such that $e\mapsto x$, $f\mapsto y$, $h\mapsto z$,
which extends to a bijective $1$-cocycle $\pi:\cU(\sld)\to S(V)$. We compute it explicitly.

By direct computation we have that
\begin{align*}
    \pi(e^n) &= x^n, & \pi(f^n)&= y^n, & \pi(h^n) &= \sum_{j=1}^n \tau_n(j)\alpha^{n-j} z^j, & n &\in\N,
\end{align*}
for $\tau_n(j)$ as in \eqref{eq:defn-tau}. 
As the action is by derivations, and $f^3\rightharpoonup x=0$,
\begin{align*}
    f^j \rightharpoonup x^a &= \sum_{t=0}^{\lfloor j/2\rfloor} \alpha^t (1-\alpha^{-1})^{j-t} \binom{a}{t} \binom{a-t}{j-2t} z^{j-2t}y^t x^{a-j+t}, & j,a &\in \N.
\end{align*}
Let $a,b\in\N_0$. We compute, using \eqref{eq:cocycle-condition},
\begin{multline*}
    \pi(f^be^a) = \sum_{j=0}^b \binom{b}{j} \pi(f^{b-j}) f^j \rightharpoonup \pi(e^a) \\
    = \sum_{j=0}^b \sum_{t=0}^{\lfloor j/2\rfloor} \alpha^t (1-\alpha^{-1})^{j-t} \binom{b}{j}    \binom{a}{t} \binom{a-t}{j-2t} z^{j-2t}y^{b-j+t} x^{a-j+t}.
\end{multline*}
By \eqref{eq:cocycle-condition} again, for all $a,b,c\in\N_0$ we have that
\begin{multline*}
    \pi(h^cf^be^a)
    = \sum_{k=0}^c \sum_{j=0}^b \sum_{s=1}^k  \sum_{t=0}^{\lfloor j/2\rfloor}
    \binom{c}{k} \binom{b}{j} \binom{a}{t} \binom{a-t}{j-2t} \\
    \tau_k(s)\alpha^{k-s+t}(1-\alpha^{-1})^{j-t}
    \big( (b-a+j)\alpha+b-a \big)^{c-k}
    z^{j-2t+s} y^{b-j+t} x^{a-j+t}.
\end{multline*}
\end{exa}

\section{Affine braces and commutative Hopf co-braces}
\label{schemes}

We will denote by $\operatorname{Alg}_k$  the category of all commutative $k$-algebras.  Each commutative $k$-algebra $A$ defines a contravariant functor $$h^A:=\Hom_{\operatorname{Alg}_k}(A,-):\operatorname{Alg}_k\to \operatorname{Set}.$$ A functor $\mathcal{A}:\operatorname{Alg}_k\to \operatorname{Set}$ is said to be \textit{representable} if it is isomorphic to $h^A$ for some commutative $k$-algebra. A representable functor $\mathcal{A}:\operatorname{Alg}_k\to \operatorname{Set}$ is called an \textit{affine scheme}.

\begin{defn}
A brace functor is a functor  $\mathcal{A}:\operatorname{Alg}_k\to \operatorname{Set}$, together with  natural transformations $m,m':\mathcal{A}\times \mathcal{A}\to \mathcal{A}$ such that, for all commutative $k$-algebra $R$
\begin{align*}
    m(R):\mathcal{A}(R)\times \mathcal{A}(R) &\to \mathcal{A}(R), &&& m'(R):\mathcal{A}(R)\times \mathcal{A}(R) &\to \mathcal{A}(R),\\ (a,b) &\mapsto ab, &&&     (a,b) &\mapsto a\circ b,
\end{align*}
define a  brace structure on $\mathcal{A}(R)$. A  brace functor $(\mathcal{A},m,m')$ is called an \textit{affine} brace if $\mathcal{A}$ is an affine scheme. 
\end{defn}

\begin{rem}
If $(\mathcal{A},m,m')$ is a brace functor, then it defines a functor from $\operatorname{Alg}_k$ to the category of braces.
\end{rem}
\begin{example}
A group functor $G$ is called a semidirect product of the subgroup functors $N$ and $Q$ if $N$ is normal and the map $N(R)\times Q(R)\to G(R), (n,q)\mapsto nq$ is a bijection of sets for all commutative $k$-algebras $R$. 
Let $G$ be a semidirect product of $N$ and $Q$. Then the functor $N\times Q$ is a brace functor with products on $N(R)\times Q(R)$
\begin{align*}
        &(n,q)(n',q')=(nn',qq'),&&(n,q)\circ(n'q')=(n(qnq^{-1}),qq'), 
    \end{align*}
$n,n'\in N(R),\,q,q'\in Q(R)$. If $N$ and $Q$ are affine, this construction defines an affine brace.
\end{example}

Let $\mathbb{A}^1$ be the forgetful functor, $$\mathbb{A}^1:\operatorname{Alg}_k\to \operatorname{Set},\  \  \   (R,+,\cdot,1)\mapsto  R.$$

Let $\mathcal{A}$ be an affine scheme. The set of natural transformations  $$\mathcal{O}(\mathcal{A}):=\operatorname{Nat}(\mathcal{A},\mathbb{A}^1)$$ has a  commutative $k$-algebra structure and there is a canonical natural isomorphism  $\alpha:\mathcal{A}\to h^{\mathcal{O}(\mathcal{A})}$ (see \cite{waterhouse2012introduction} for details). The $k$-algebra $\mathcal{O}(\mathcal{A})$ is called the (canonical) coordinate ring of $\mathcal{A}$.

\begin{defn}
	\label{def:cobrace}
	Let $(A,m,1)$ be an algebra. A \emph{Hopf co-brace structure} over $A$ consist of the following data:
	\begin{enumerate}
	    \item a Hopf algebra structure $(A,m,1,\Delta_\cdot,\epsilon_\cdot,S)$ and 
	    \item a Hopf algebra structure $(A,m,1,\Delta_\circ,\epsilon_\circ,T)$ 
	\end{enumerate}
	satisfying the following compatibility:
	\begin{align}
		\label{eq:cobrace}
		&a_{1_\circ}\otimes (a_{2_\circ})_{1_\cdot}\otimes (a_{2_\circ})_{2_\cdot}=(a_{1\cdot})_{1_\circ}S(a_{2\cdot})(a_{3\cdot})_{1_\circ}\otimes (a_{1\cdot})_{2_{\circ}}\otimes(a_{3\cdot})_{2_\circ}
	\end{align}
	for all $a,b,c\in A$.
\end{defn}

\begin{exa}
If $A$ is a cocommutative Hopf brace, then $A^{\circ}$ (the finite dual of $A$) is a commutative Hopf co-brace. 
\end{exa}
The Yoneda lemma implies that for any two commutative algebras $A, B$, there is a natural correspondence between elements of $\Hom_{\operatorname{Alg}}(A,B)$ and $\operatorname{Nat}(h^B,h^A)$, the natural transformation from $h^B$ to $h^A$.

If $H$ is a commutative Hopf algebra, then $h^H$ is an affine group, i.e. for every $k$-algebra $R$, $h^H(R)$ is a group with group structure given by the convolution product,
\begin{align*}
   m_{\Delta}: h^H(R)\times h^H(R)&\to h^H(R)\\
    (f_1,f_2) &\mapsto (f1*f_2)(h):=f_1(h_1)f_2(h_2).
\end{align*}
Conversely, for every affine group $G$ the coordinate ring $\mathcal{O}(G)$ has a Hopf algebra structure defined via Yoneda Lemma such that $G\cong h^{\mathcal{O}(G)}$ as group functors. 
\begin{pro}
If $A$ is a commutative Hopf co-brace then $h^A$ is an affine brace. Conversely, if $\mathcal{A}$ is an affine brace $\mathcal{O}(\mathcal{A})$ is  a commutative Hopf co-brace. This correspondence defines a canonical  contravariant equivalence of categories.
\end{pro}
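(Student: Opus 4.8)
The plan is to unwind the statement into the three concrete tasks hidden behind ``canonical contravariant equivalence of categories'' and dispatch each one by the Yoneda machinery already recalled just above. First I would fix the two constructions on objects. Given a commutative Hopf co-brace $A=(A,m,1,\Delta_\cdot,\epsilon_\cdot,S,\Delta_\circ,\epsilon_\circ,T)$, the functor $h^A$ carries, for each commutative $k$-algebra $R$, the two convolution products $m_{\Delta_\cdot}$ and $m_{\Delta_\circ}$ coming from the two coproducts; I must check that these two group structures on $h^A(R)$ satisfy the brace identity $a(b+c)+a=ab+ac$ (written multiplicatively, $a\circ(bc)=(a\circ b)a^{-1}(a\circ c)$ in the skew-brace convention of the paper). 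This is a pointwise computation: for $f_1,f_2,f_3\in h^A(R)$ one evaluates both sides on $a\in A$, pushes the evaluation through the algebra maps, and the identity falls out of precisely the co-brace compatibility \eqref{eq:cobrace} after applying $f_1\otimes f_2\otimes f_3$ and $m_R$. Conversely, if $\mathcal{A}$ is an affine brace with its two natural multiplications $m,m'$, then Yoneda turns each natural transformation $\mathcal{A}\times\mathcal{A}\to\mathcal{A}$ into an algebra map $\mathcal{O}(\mathcal{A})\to\mathcal{O}(\mathcal{A})\otimes\mathcal{O}(\mathcal{A})$, i.e.\ two coproducts $\Delta_\cdot,\Delta_\circ$ on $\mathcal{O}(\mathcal{A})$; each is a Hopf algebra structure because each $m(R)$ (resp.\ $m'(R)$) is a group law (associativity, unit and inverse become coassociativity, counit and antipode under Yoneda, exactly as recalled for affine groups), and the brace identity holding in every $\mathcal{A}(R)$ is equivalent, again by Yoneda applied to the three-variable natural transformation expressing it, to \eqref{eq:cobrace}.

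Next I would treat morphisms. A morphism of commutative Hopf co-braces $A\to B$ is an algebra map that intertwines both pairs of coalgebra structures; under $h^{(-)}$ it becomes a natural transformation $h^B\to h^A$ that is a homomorphism for both convolution products, hence a morphism of affine braces, and this assignment is visibly functorial and contravariant. In the other direction a morphism of affine braces $\mathcal{A}\to\mathcal{B}$ (a natural transformation compatible with $m,m'$) induces via $\mathcal{O}(-)=\operatorname{Nat}(-,\Sym^1)$ an algebra map $\mathcal{O}(\mathcal{B})\to\mathcal{O}(\mathcal{A})$ which, by the same Yoneda bookkeeping, respects both coproducts and both counits and hence both antipodes, so it is a morphism of Hopf co-braces.

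Finally I would assemble the equivalence. One has the classical natural isomorphism $\alpha\colon\mathcal{A}\to h^{\mathcal{O}(\mathcal{A})}$ for affine schemes (cited from Waterhouse) and, dually, the isomorphism $A\to\mathcal{O}(h^A)$ of commutative algebras for any commutative $k$-algebra $A$; the content of this step is only to observe that these standard isomorphisms are compatible with the two multiplications on each side, which holds because they are already the comparison maps for affine groups/Hopf algebras and the brace structure is an extra identity they automatically carry along. Composing everything shows $h^{(-)}$ and $\mathcal{O}(-)$ are mutually quasi-inverse contravariant functors between commutative Hopf co-braces and affine braces.

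I expect the main obstacle to be purely notational: verifying that \eqref{eq:cobrace} is exactly the Yoneda-dual of the skew-brace compatibility $a\circ(bc)=(a\circ b)a^{-1}(a\circ c)$ requires keeping four layers of Sweedler-type indices (two for $\Delta_\cdot$, two for $\Delta_\circ$) straight and correctly inserting the antipode $S$ where the inverse $a^{-1}$ appears, since the convolution inverse of the identity point is computed by $S$. Everything else — functoriality, contravariance, and the construction of the quasi-inverse natural isomorphisms — is a direct transcription of the affine-group/commutative-Hopf-algebra duality recalled immediately before the statement, now carried out for a pair of structures rather than one.
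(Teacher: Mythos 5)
Your proposal is correct and follows essentially the same route as the paper: both reduce the statement to the classical affine-group/commutative-Hopf-algebra duality from Waterhouse and then verify that the skew-brace compatibility on points corresponds under Yoneda precisely to the co-brace identity \eqref{eq:cobrace} (the paper phrases this as the commutativity of two dual diagrams, you phrase it as a pointwise Sweedler computation, but it is the same translation). Your additional remarks on morphisms and on the compatibility of the comparison isomorphisms fill in details the paper leaves to the reader.
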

\begin{proof}
The correspondence follows in the same lines as the correspondence between affine groups and commutative Hopf algebras, see \cite[Section 1.4]{waterhouse2012introduction}. We will use the fact that the category of affine schemes is equivalent to the opposite category of commutative $k$-algebras via  Yoneda Lemma and the canonical coordinate algebra.

Let $(\mathcal{A},m,m')$ be an affine brace. Then,
\[\mathcal{A}\times \mathcal{A} \simeq h^{\mathcal{O}(\mathcal{A})}\times h^{\mathcal{O}(\mathcal{A})}\simeq h^{\mathcal{O}(\mathcal{A})\otimes \mathcal{O}(\mathcal{A})},\]
and the natural transformations $m$ and $m'$ correspond to comultiplications $\Delta_\cdot, \Delta_{\circ}: \mathcal{O}(\mathcal{A})\to \mathcal{O}(\mathcal{A})\otimes \mathcal{O}(\mathcal{A})$, such that $\mathcal{O}(\mathcal{A})$ is a commutative Hopf co-brace. In fact, by \cite[Section 1.4]{waterhouse2012introduction},  $(\mathcal{O}(\mathcal{A}),\Delta_\cdot)$ and $(\mathcal{O}(\mathcal{A}), \Delta_\circ)$ are Hopf algebras, we only need to check the equation \eqref{eq:cobrace}. The  (skew) brace condition say that the diagram of natural transformations

\begin{equation}\label{comm brace}
\begin{tikzcd}
\mathcal{A}^{\times 3} \ar{rrrr}{\mathcal{F}}  \ar{dd}{\id_{\mathcal{A}}\times m_\cdot} &&&& \mathcal{A}^{\times 5} \ar{dd}{m_\circ \times\id_{\mathcal{A}}\times m_\circ}\\\\
\mathcal{A}^{\times 2}\ar{rr}{m_\circ} && \mathcal{A}&& \mathcal{A}^{\times 3} \ar{ll}{m_\cdot^{(2)}}
\end{tikzcd}
\end{equation}commutes, where $\mathcal{F}(R)(a,b,c)= (a,b,a^{-1},a,c)$. Since, the category of affine schemes is  equivalent to the opposite of category of commutative $k$-algebra, diagram \eqref{comm brace} implies the commutativity of the diagram

\begin{equation}\label{comm Hopf}
\begin{tikzcd}
\mathcal{O}(\mathcal{A})^{\otimes 3}   &&&& \ar{llll}{\widehat{\mathcal{F}}} \mathcal{O}(\mathcal{A})^{\otimes 5}   \\\\
\ar{uu}{\id_{\mathcal{O}(\mathcal{A})}\otimes \Delta_\cdot} \mathcal{O}(\mathcal{A})^{\otimes 2} && \ar{ll}{\Delta_\circ} \mathcal{O}(\mathcal{A}) \ar{rr}{\Delta_\cdot^{(2)}} && \mathcal{O}(\mathcal{A})^{\otimes 3}  \ar{uu}{\Delta_\circ \times\id_{\mathcal{O}(\mathcal{A})}\times \Delta_\circ}
\end{tikzcd}
\end{equation}
where $\widehat{\mathcal{F}}(a\otimes b\otimes c\otimes d\otimes e)=a\circ b\otimes S(c)\otimes (d\circ e)$. This diagram is exactly \eqref{eq:cobrace}.

Conversely, let $A$ be a commutative Hopf co-brace with comultiplications $\Delta\cdot, \Delta_{\circ}$. It follows directly that \eqref{eq:cobrace} implies that for every commutative $k$-algebra the maps \begin{align*}
m_{\cdot}: h^A(R)\times h^A(R) &\to h^A(R)\\
 (f_1,f_2) &\mapsto f_1\cdot f_2 := (f_1,f_2)\circ \Delta_\cdot,\\
 m_{\circ}: h^A(R)\times h^A(R) &\to h^A(R)\\
 (f_1,f_2) &\mapsto f_1\cdot f_2 := (f_1,f_2)\circ \Delta_\circ.
\end{align*} define a brace structure on $h^A(R)$.
\end{proof}

\section*{Acknowledgements}
The work of Angiono was partially supported by CONICET, Secyt (UNC), PICT-2013-1414 and MathAmSud project GR2HOPF.
Galindo was partially supported by the FAPA funds from
Vicerrector\'ia de Investigaciones de la Universidad de los Andes.
The work of Vendramin is partially supported by CONICET, PICT-2014-1376, MATH-AmSud, and ICTP.

Part of this work was done during a visit of Angiono and Vendramin to the Universidad de los Andes.
They would like to thank Universidad de los Andes for the warm hospitality and support.

\bibliographystyle{abbrv}
\bibliography{refs}

\end{document}